\theoremstyle{plain}
\newtheorem{thm}{Theorem}[section]
\newtheorem{prop}[thm]{Proposition}
\newtheorem{cor}[thm]{Corollary}
\newtheorem{lemma}[thm]{Lemma}
\theoremstyle{definition}
\newtheorem{defn}[thm]{Definition}
\newtheorem{example}[thm]{Example}
\newtheorem{remark}[thm]{Remark}
\numberwithin{equation}{section}
\def\wt{\operatorname{wt}}
\def\xing{\operatorname{xing}}
\def\loop{\operatorname{loop}}
\def\wind{\operatorname{wind}}
\def\P{\mathbf{P}}
\begin{document}

\title[Pl\"ucker coordinates associated with a planar network]{A
  formula for Pl\"ucker coordinates\\[.05in] associated with a planar network}
\author{Kelli Talaska}
\address{Department of Mathematics, University of Michigan,
Ann Arbor, MI 48109, USA} \email{kellicar@umich.edu}
\thanks{The author was partially supported by NSF Grants
CCF-04.30201 and DMS-0502170.}

\date{April 14, 2008 %\today
}

\subjclass[2000]{
Primary
15A48, % Positive matrices and their generalizations; cones of matrices
Secondary
05A15 % Exact enumeration problems, generating functions
}

\keywords{Total positivity, perfectly oriented graph, totally
nonnegative Grassmannian, Pl\"ucker coordinates, boundary
measurements}

\begin{abstract}
For a planar directed graph~$G$, Postnikov's boundary measurement
map sends positive weight functions on the edges of~$G$ onto the
appropriate totally nonnegative Grassmann cell. We establish an
explicit formula for Postnikov's map by expressing each Pl\"ucker
coordinate as a ratio of two combinatorially defined polynomials
in the edge weights, with positive integer coefficients.  In the
non-planar setting, we show that a similar formula holds for
special choices of Pl\"ucker coordinates.
\end{abstract}

\maketitle

\section{Introduction}

Totally nonnegative Grassmannians are an important subclass of general
totally nonnegative homogeneous spaces, first introduced and
studied by G.~Lusztig and K.~Rietsch (see, e.g.,~\cite{lusztig,
rietsch, rietsch2}). Informally speaking, the totally nonnegative
Grassmannian is the part of a real Grassmann manifold where all
Pl\"ucker coordinates are nonnegative. A.~Postnikov's
groundbreaking paper~\cite{post} established combinatorial
foundations for the study of totally nonnegative Grassmannians, in
particular providing the tools required for the construction of
cluster algebra structures in (ordinary) Grassmannians by
J.~Scott~\cite{scott}, and for the exploration of tropical
analogues by D.~Speyer and L.~Williams~\cite{speyer-williams}.

The goal of this paper is to give an explicit combinatorial
formula describing the main construction in~\cite{post}: the
\emph{boundary measurement map} that assigns a point in the
totally nonnegative Grassmannian to each planar directed network
with positive edge weights. To state our main results, we will
need to quickly recall the main features of Postnikov's
construction;  for complete details, see
Section~\ref{sec:boundary-meas}.

The construction begins with a planar directed graph~$G$ properly
embedded in a disk. Every vertex of $G$ lying on the boundary of
the disk is assumed to be a source or a sink. Each edge of~$G$ is
assigned a \emph{weight}, which we treat as a formal variable.
Postnikov defines the \emph{boundary measurement matrix}~$A$ with
columns labeled by the boundary vertices and rows labeled by the
set $I$ of boundary sources, as follows. Each matrix entry of~$A$
is, up to a sign that accounts for how the sources and sinks
interlace along the boundary, a weight generating function for
directed walks from a given boundary source to a given boundary
vertex, where each walk is counted with a sign reflecting the
parity of its topological winding index. The maximal minors
$\Delta_J(A)$ of the boundary measurement matrix~$A$ (here $J$ is
a subset of boundary vertices with $|J|=|I|$) are then interpreted
as Pl\"ucker coordinates of a point in a Grassmannian. The fact
that these minors are nonnegative (so that we get a point in a
totally nonnegative Grassmannian) follows from the assertion in
\cite{post} that each maximal minor $\Delta_J(A)$ can be written
as a \emph{subtraction-free} rational expression in the edge
weights.

Postnikov's proof of this fact is recursive.  In this paper, we provide
a direct proof via an explicit combinatorial formula for the minors
$\Delta_J(A)$, writing each of them as a ratio of two polynomials in the
edge weights, with positive integer coefficients.

Here we state the main result in the case that $G$ is
\emph{perfectly oriented}, i.e., every interior vertex of $G$ has
exactly one incoming edge or exactly one outgoing edge (or both).
We will later address an extension to graphs which are planar but
not necessarily perfectly oriented and discuss an analogue for
non-planar perfectly oriented graphs.

In order to state our formula, we will need the following notions.
A \emph{conservative flow} in a perfectly oriented graph~$G$ is a
(possibly empty) collection of pairwise vertex-disjoint oriented
cycles. (Each cycle is self-avoiding, i.e., it is not allowed to
pass through a vertex more than once. For perfectly oriented
graphs~$G$, this is equivalent to not repeating an edge.) For
$|J|=|I|$, a \emph{flow from $I$ to $J$} is a collection of
self-avoiding walks and cycles, all pairwise vertex-disjoint, such
that each walk connects a source in $I$ to a boundary vertex in
$J$.  (A vertex may be connected to itself by a walk with no
edges.) The \emph{weight} of a flow (conservative or not) is the
product of the weights of all its edges. A flow with no edges has
weight~1.

\begin{thm}
\label{th:main-intro}
The Pl\"ucker coordinate $\Delta_J(A)$
is given by $\Delta_J=\frac{f}{g}$, where $f$ and $g$ are nonnegative
polynomials in the edge weights, defined as follows:
\begin{itemize}
\item $f$ is the weight generating function for all flows from $I$ to~$J$;
\item $g$ is the weight generating function for all conservative
  flows in~$G$.
\end{itemize}
\end{thm}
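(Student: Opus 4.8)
The plan is to realize the boundary measurement matrix as a submatrix of the resolvent of a signed weighted adjacency matrix, and then to read off both $f$ and $g$ from the expansion of a single determinant via Jacobi's complementary-minor identity. Concretely, I would first choose, for each edge $e=(u\to v)$ of $G$, a local sign $s_e\in\{\pm 1\}$ dictated by the planar embedding, and form the $V\times V$ matrix $N$ with $N_{uv}=\sum_{u\to v}s_e\,\wt(e)$. The point of the sign convention is that the product of the $s_e$ along any directed walk should reproduce $(-1)^{\wind}$ of that walk, so that $[(\Id-N)^{-1}]_{ij}=\sum_{w:\,i\to j}(-1)^{\wind(w)}\wt(w)$ as a formal power series (which sums to a rational function). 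Together with the interlacing signs built into the definition of $A$, this exhibits $A$ as the submatrix of $(\Id-N)^{-1}$ with rows $I$ and columns $J$.

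Next I would apply Jacobi's identity for the minors of an inverse, which gives
$$\Delta_J(A)=\det\bigl([(\Id-N)^{-1}]_{I,J}\bigr)=(-1)^{\sigma(I)+\sigma(J)}\,\frac{\det\bigl((\Id-N)[\bar J,\bar I]\bigr)}{\det(\Id-N)},$$
where $\bar I,\bar J$ denote complements in $V$ and $\sigma(\cdot)$ is the sum of the indices. This reduces the theorem to two combinatorial identities: that the denominator $\det(\Id-N)$ equals $g$, and that the signed complementary minor in the numerator equals $f$. (Clearing denominators directly instead would leave $g^{|I|}$ downstairs, so the collapse to a single factor of $g$ is exactly why Jacobi's identity is the right tool.)

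For the denominator, I would expand $\det(\Id-N)$ over permutations of $V$. A nonzero term corresponds to a permutation whose nontrivial cycles are realized by pairwise vertex-disjoint oriented cycles of $G$, with fixed points contributing the diagonal $1$'s; that is exactly a conservative flow. The only real work is checking that the three signs cancel: for a single oriented cycle of length $\ell$ the permutation sign $(-1)^{\ell-1}$, the factor $(-1)^{\ell}$ from choosing $\ell$ edges of $-N$, and the collected winding sign $(-1)^{\wind}$ multiply to $+1$, since a simple oriented cycle in the plane has odd winding sign. Hence every conservative flow is counted with coefficient $+1$ and $\det(\Id-N)=g$. The numerator is handled by the same mechanism applied to the complementary minor: its expansion produces families consisting of pairwise vertex-disjoint directed paths from $I$ to $J$ together with disjoint cycles on the remaining vertices --- precisely flows from $I$ to $J$ --- and the Jacobi sign $(-1)^{\sigma(I)+\sigma(J)}$ together with the interlacing signs is exactly what is needed to make each such flow appear with coefficient $+1$.

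I expect the sign analysis to be the crux, and in particular the treatment of configurations that are \emph{not} vertex-disjoint. The naive determinant expansion also produces terms in which two paths or cycles share a vertex, and these must cancel in pairs. The key lemma will be a sign-reversing involution (in the spirit of Lindström--Gessel--Viennot, but adapted to the cyclic, winding setting) that swaps the tails of two walks meeting at a shared vertex and flips the overall sign, leaving only the genuinely disjoint flows as survivors. Making this involution well-defined and sign-reversing requires the topological input that, for walks in a planar graph, the parity of the winding and crossing data changes in step with the permutation sign under such a swap; this is the one place where planarity of $G$ is essential, and it is the step I would expect to demand the most care.
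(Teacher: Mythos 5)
Your overall strategy --- realizing $A$ inside the resolvent $(\Id-N)^{-1}$ of a signed weighted adjacency matrix and applying Jacobi's complementary-minor identity --- is a legitimate alternative to the paper's argument, and in one respect you are underselling it: after Jacobi, the complementary minor $\det\bigl((\Id-N)[\bar J,\bar I]\bigr)$ expands \emph{directly} over configurations in which every vertex has in-degree and out-degree at most one, i.e.\ over vertex-disjoint self-avoiding paths from $I$ to $J$ together with disjoint cycles. So the Lindstr\"om--Gessel--Viennot style cancellation you describe in your final paragraph is not needed at that stage; that part of the plan suggests you have not fully traced through your own reduction. The paper's route is genuinely different: it expands $g\cdot\Delta_J$ as a signed sum over a conservative flow together with a tuple of possibly intersecting and self-intersecting walks, and cancels the unwanted terms by an explicit sign-reversing involution (tail swapping, and shuttling cycles between the conservative flow and a walk), with the winding and crossing parities controlled topologically via Lemma~\ref{xingtailswap} and the unwinding argument of Figure~\ref{fig:wind-tail-swap}.

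The genuine gap is your very first step: the existence of edge signs $s_e\in\{\pm1\}$ with $\prod_{e\in P}s_e=(-1)^{\wind(P)}$ for every boundary-to-boundary walk $P$. The winding index is computed from the turning angles at the \emph{corners} of $P$ (pairs of consecutive edges), not from its edge multiset, so there is no obvious ``local sign dictated by the planar embedding''; the natural local factorization of the winding uses complex corner weights $e^{i\alpha/2}$ in the spirit of Kac--Ward, not real edge signs. To justify your claim one would need to (i) use perfect orientation and loop-erasure (Proposition~\ref{loop=wind}) to reduce it to the requirements that $\prod_{e\in C}s_e=-1$ on every simple directed cycle and $\prod_{e\in P}s_e=+1$ on every self-avoiding boundary-to-boundary path, and then (ii) prove that the resulting $\mathbb{F}_2$-linear system on the edges is consistent, i.e.\ that every $\mathbb{F}_2$-dependency among simple directed cycles and simple boundary paths contains an even number of cycles. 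Step (ii) genuinely uses planarity and is comparable in substance to the sign analysis the paper actually performs; neither step appears in your proposal, and both the identity $\det(\Id-N)=g$ and the numerator identity rest entirely on it. Finally, the closing sign bookkeeping is also asserted rather than checked: the interlacing signs $(-1)^{s(i,j)}$ are not simply a product of a row sign and a column sign, which is exactly why the paper must pass through the crossing number $\xing(\pi)$ (Proposition~\ref{xing}) before it can conclude that the surviving non-crossing configurations all carry coefficient $+1$.
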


If the underlying graph $G$ is acyclic, then $g=1$, and
Theorem~\ref{th:main-intro} reduces to the well known result of
B.~Lindstr\"om~\cite{lind} expressing the determinant of a matrix
associated with a planar acyclic network in terms of
non-intersecting paths; see, e.g., \cite{tptp} and references
therein. Thus, Theorem~\ref{th:main-intro} can be viewed as a
generalization of Lindstr\"om's result to non-acyclic planar
networks. Another such generalization was given by
S.~Fomin~\cite{lew} whose setup differed from Postnikov's in that
the analogues of boundary measurements did not involve any signs.
In Fomin's approach, total nonnegativity is achieved---for edge
weights specialized to nonnegative real values---by writing the
minors in question as formal power series with nonnegative
coefficients. In contrast, Postnikov's map produces
subtraction-free expressions which are rational (that is, involve
division) but finite (that is, do not require infinite summation).

The rest of the paper is organized as follows. In
Section~\ref{sec:boundary-meas}, we review Postnikov's
construction of boundary measurements in planar circular networks.
In Section~\ref{sec:involution}, we present our main result in the
perfectly oriented case (Theorem~\ref{th:mainthm}, a more formal
restatement of Theorem \ref{th:main-intro}), and provide a proof
based on a sign-reversing involution.
We then extend these results in two directions.
 In Section~\ref{sec:general}, we give a generalization
 of Theorem~\ref{th:mainthm}, extending to planar networks which are
not necessarily perfectly oriented.
In Section~\ref{sec:non-planar} we examine non-planar networks
with perfect orientations, using G.~Lawler's notion of
loop-erasure in place of the topological winding index. In this
generality, we show that the formula in Theorem~\ref{th:mainthm}
holds for those Pl\"ucker coordinates which are equal to
individual entries in the boundary measurement matrix, but not for
arbitrary minors $\Delta_J$.

\section{Boundary measurements in perfectly oriented networks}
\label{sec:boundary-meas}

\begin{defn}\label{def:circ}
A \emph{planar circular directed graph} is a finite directed graph~$G$
properly embedded in a closed oriented disk
(so that its edges intersect only at the appropriate vertices),
together with a distinguished labeled subset
$\{b_1\ldots,b_n\}$ of \emph{boundary vertices} such that
\begin{enumerate}
\item
$b_1, \ldots, b_n$ appear in clockwise
order around the boundary of the disk,
\item
all other vertices of~$G$ lie
in the interior of the disk, and
\item
each boundary vertex $b_i$ is incident to at most one edge.
\end{enumerate}
A non-boundary vertex in~$G$ is called an \emph{interior vertex}.
Loops and multiple edges are permitted. Each boundary vertex is
designated a source or a sink, even if it is an isolated vertex.
We denote by $I\subseteq [n]=\{1,\dots,n\}$ the indexing set for the boundary
sources of~$G$, so that these sources form the set $\{b_i:i\in
I\}$.

A~\emph{planar circular network} $N=(G,x)$ is a planar circular
directed graph $G$ together with a collection $x=(x_e)$ of formal
variables~$x_e$ labeled by the edges $e$ in~$G$. We call $x_e$ the
\emph{weight} of~$e$.
\end{defn}

\begin{defn}[\cite{post}]\label{def:perf}
A planar circular directed graph (or network) is said to be
  \emph{perfectly oriented} if every interior vertex either has exactly one outgoing
  edge (with all other edges incoming) or exactly one incoming
  edge (with all other edges outgoing).
\end{defn}

For example, let $G$ be a circular directed graph in which all
interior vertices are trivalent, with no interior sources or
sinks. Then $G$ is perfectly oriented. Such a graph is shown in
Figure~\ref{fig:mainex}; this will serve as our running example
throughout Sections~\ref{sec:boundary-meas}
and~\ref{sec:involution}.

\begin{figure}[ht]
\psset{unit=10pt}
\begin{pspicture}(-2,-2)(37,27)

\psline[linestyle=dotted](0,0)(0,25)(25,25)(25,0)(0,0)

\psline[linewidth=2pt,arrows=*->,arrowsize={1.5pt 4}](5,5)(5,2)
\psline[linewidth=2pt,arrows=-*,arrowsize={1.5pt 4}](5,3)(5,0)
\psline[linewidth=2pt,arrows=->,arrowsize={1.5pt 4}](5,5)(5,8)
\psline[linewidth=2pt,arrows=-*,arrowsize={1.5pt 4}](5,7)(5,10)
\psline[linewidth=2pt,arrows=*->,arrowsize={1.5pt 4}](5,15)(5,12)
\psline[linewidth=2pt,arrows=-*,arrowsize={1.5pt 4}](5,13)(5,10)
\psline[linewidth=2pt,arrows=->,arrowsize={1.5pt 4}](5,15)(5,18)
\psline[linewidth=2pt,arrows=-*,arrowsize={1.5pt 4}](5,17)(5,20)
\psline[linewidth=2pt,arrows=*->,arrowsize={1.5pt 4}](0,20)(3,20)
\psline[linewidth=2pt,arrows=-,arrowsize={1.5pt 4}](2,20)(5,20)
\psline[linewidth=2pt,arrows=*->,arrowsize={1.5pt 4}](5,20)(8,20)
\psline[linewidth=2pt,arrows=-,arrowsize={1.5pt 4}](7,20)(10,20)
\psline[linewidth=2pt,arrows=*->,arrowsize={1.5pt
4}](10,20)(10,23) \psline[linewidth=2pt,arrows=-*,arrowsize={1.5pt
4}](10,22)(10,25)

\psline[linewidth=2pt,arrows=->,arrowsize={1.5pt 4}](10,20)(10,17)
\psline[linewidth=2pt,arrows=-*,arrowsize={1.5pt 4}](10,18)(10,15)
\psline[linewidth=2pt,arrows=->,arrowsize={1.5pt 4}](10,15)(7,15)
\psline[linewidth=2pt,arrows=-,arrowsize={1.5pt 4}](8,15)(5,15)
\psline[linewidth=2pt,arrows=->,arrowsize={1.5pt 4}](5,10)(8,10)
\psline[linewidth=2pt,arrows=-*,arrowsize={1.5pt 4}](7,10)(10,10)

\psline[linewidth=2pt,arrows=->,arrowsize={1.5pt 4}](10,10)(10,7)
\psline[linewidth=2pt,arrows=-*,arrowsize={1.5pt 4}](10,8)(10,5)
\psline[linewidth=2pt,arrows=->,arrowsize={1.5pt 4}](10,5)(7,5)
\psline[linewidth=2pt,arrows=-,arrowsize={1.5pt 4}](8,5)(5,5)

\psline[linewidth=2pt,arrows=*->,arrowsize={1.5pt 4}](15,5)(12,5)
\psline[linewidth=2pt,arrows=-,arrowsize={1.5pt 4}](13,5)(10,5)
\psline[linewidth=2pt,arrows=->,arrowsize={1.5pt 4}](10,10)(13,10)
\psline[linewidth=2pt,arrows=-*,arrowsize={1.5pt 4}](12,10)(15,10)
\psline[linewidth=2pt,arrows=->,arrowsize={1.5pt 4}](15,5)(15,8)
\psline[linewidth=2pt,arrows=-,arrowsize={1.5pt 4}](15,7)(15,10)

\psline[linewidth=2pt,arrows=->,arrowsize={1.5pt 4}](15,10)(18,10)
\psline[linewidth=2pt,arrows=-*,arrowsize={1.5pt 4}](17,10)(20,10)
\psline[linewidth=2pt,arrows=->,arrowsize={1.5pt 4}](20,10)(20,13)
\psline[linewidth=2pt,arrows=-*,arrowsize={1.5pt 4}](20,12)(20,15)

\psline[linewidth=2pt,arrows=->,arrowsize={1.5pt 4}](20,10)(20,7)
\psline[linewidth=2pt,arrows=-*,arrowsize={1.5pt 4}](20,8)(20,5)
\psline[linewidth=2pt,arrows=->,arrowsize={1.5pt 4}](20,5)(17,5)
\psline[linewidth=2pt,arrows=-,arrowsize={1.5pt 4}](18,5)(15,5)
\psline[linewidth=2pt,arrows=*->,arrowsize={1.5pt 4}](25,5)(22,5)
\psline[linewidth=2pt,arrows=-,arrowsize={1.5pt 4}](23,5)(20,5)

\psline[linewidth=2pt,arrows=->,arrowsize={1.5pt
4}](10,15)(15.5,15)
\psline[linewidth=2pt,arrows=-,arrowsize={1.5pt 4}](15,15)(20,15)
\psline[linewidth=2pt,arrows=->,arrowsize={1.5pt
4}](20,15)(20,20.5)
\psline[linewidth=2pt,arrows=-*,arrowsize={1.5pt 4}](20,20)(20,25)

\uput[r](25,5){\large{$b_4$}} \uput[u](20,25){\large{$b_3$}}
\uput[u](10,25){\large{$b_2$}} \uput[l](0,20){\large{$b_1$}}
\uput[d](5,0){\large{$b_5$}}

\uput[r](20,7.5){$w_1$} \uput[r](15,7.5){$w_3$}
\uput[u](17.5,5){$w_2$} \uput[u](17.5,10){$w_4$}
\uput[r](10,7.5){$y_1$} \uput[r](5,7.5){$y_3$}
\uput[u](7.5,5){$y_2$} \uput[u](7.5,10){$y_4$}
\uput[r](10,17.5){$z_1$} \uput[r](5,17.5){$z_3$}
\uput[u](7.5,15){$z_2$} \uput[u](7.5,20){$z_4$}

\uput[u](22.5,5){$a_4$} \uput[r](20,20){$a_3$}
\uput[r](10,22.5){$a_2$} \uput[u](2.5,20){$a_1$}
\uput[r](5,2.5){$a_5$} \uput[r](20,12.5){$c$}
\uput[r](5,12.5){$d$} \uput[u](12.5,5){$f$} \uput[u](12.5,10){$g$}
\uput[u](15,15){$h$}

\uput[r](27,21){The cycles of $N$} \uput[r](27,19){have weights}
\uput[r](27,17){$W=w_1w_2w_3w_4$,}
\uput[r](27,15){$Y=y_1y_2y_3y_4$,}
\uput[r](27,13){$Z=z_1z_2z_3z_4$, and}
\uput[r](27,11){$T=fy_2y_3y_4gw_4w_1w_2$.}

\end{pspicture}
\caption{A perfectly oriented planar circular network $N$
with boundary vertices $b_1, b_2, b_3, b_4, b_5$.  Edges are
labeled by their weights.}
\label{fig:mainex}
\end{figure}

A \emph{walk} $P=(e_1, \ldots, e_m)$ in~$G$ is formed by
traversing the edges $e_1, e_2, \ldots, e_m$ in the specified
order.  (The head of $e_i$ is the tail of~$e_{i+1}$.) We write
$P:u\leadsto v$ to indicate that $P$ is a walk starting at a
vertex $u$ and ending at a vertex~$v$. Note that in a perfectly
oriented circular graph, any self-intersecting walk between
boundary vertices must repeat at least one edge at every point of
self-intersection.

Define the \emph{weight} of a walk $P=(e_1, \ldots, e_m)$ to be
\[
\wt(P)= x_{e_1}\cdots x_{e_m} .
\]

A walk $P:u\leadsto u$ with no edges is called a \emph{trivial
walk} and has weight 1.

\begin{defn}[\cite{post}]\label{def:wind}
Let $P:u\leadsto v$
be a non-trivial walk in a planar circular directed graph~$G$
connecting boundary
vertices $u$ and~$v$. Performing an isotopy if necessary,
we may assume that the tangent vector to $P$ at $u$ has the same
direction as the tangent vector to $P$ at~$v$.  The \emph{winding
index} $\wind(P)$ is the signed number of full $360^\circ$ turns
the tangent vector makes as we travel along~$P$, counting
counterclockwise turns as positive. For a trivial walk~$P$, we set
$\wind(P)=1$.
\end{defn}

\begin{defn} [\cite{post}]
\label{def:bdry-meas} For boundary vertices $b_i$ and $b_j$ in a
planar circular network~$N$, the \emph{boundary measurement}
$M_{ij}$ is the formal power series
\begin{equation}
\label{eq:Mij} M_{ij}=\sum_{P:b_i\leadsto
b_j}(-1)^{\wind(P)}\wt(P),
\end{equation}
the sum over all directed walks $P:b_i \leadsto b_j$.
\end{defn}

\begin{example}
In the circular network $N$ shown in Figure~\ref{fig:mainex},
any walk $P$ from $b_1$ to $b_2$
consists of the edges with weights $a_1,z_4,a_2$, together with
some number of repetitions of the cycle of weight
$Z=z_4z_1z_2z_3$. Consequently,
\[
M_{12}=a_1z_4a_2-a_1Zz_4a_2+a_1Z^2z_4a_2-a_1Z^3z_4a_2+\ldots=\frac{a_1z_4a_2}{1+Z}\,.
\]
\end{example}

\begin{defn}\label{def:boundarymeasmatrix}
Let $N$ be a planar circular network. Let $I=\{i_1<\cdots<i_k\}$,
so that the boundary sources, listed clockwise, are
$b_{i_1},b_{i_2},\dots,b_{i_k}$. The \emph{boundary measurement
matrix} $A(N)=(a_{tj})$ is the $k\times n$ matrix defined by
\[
a_{tj} = (-1)^{s(i_t,j)} M_{i_t,j}\,,
\]
where $s(i_t,j)$ denotes the number of elements of $I$ strictly between
$i_t$ and $j$.
\end{defn}

Let $\Delta_J(A(N))$ denote the $k\times k$ minor of $A(N)$ whose
columns are indexed by~$J$.  That is,
$\Delta_J(A(N))=\det(a_{tj})_{t\in [1,k], j\in J}$. When no
confusion will arise, we may simply write $\Delta_J$. We note that
each nontrivial boundary measurement $M_{ij}$ (i.e. when $i$ is a
source and $j$ is a sink) occurs as the minor
$\Delta_{I\setminus\{i\}\cup\{j\}}$.

\begin{example}
Suppose that $N$ is the planar circular network in
Figure~\ref{fig:mainex}. Then the boundary source set is indexed
by $I=\{1,4\}$, and we have
\[A(N)=\left(%
\begin{array}{ccccc}
  1 & M_{12} & M_{13} & 0 & -M_{15} \\
  0 & M_{42} & M_{43} & 1 & M_{45} \\
\end{array}%
\right).\]

\end{example}

\begin{thm}[\cite{post}]
\label{th:sfre} If $N=(G,x)$ is a planar circular network, then
each maximal minor $\Delta_J$ of the boundary measurement matrix
can be written as a subtraction-free rational expression in the
edge weights $x_e$.
\end{thm}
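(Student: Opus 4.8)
The plan is to deduce Theorem~\ref{th:sfre} from the sharper combinatorial identity announced in Theorem~\ref{th:main-intro}, namely $\Delta_J=f/g$. Here $f$ is the generating function for flows from $I$ to $J$ and $g$ is the generating function for conservative flows; both are honest polynomials in the edge weights $x_e$ with nonnegative integer coefficients, and $g$ has constant term~$1$ because the empty collection of cycles is a conservative flow. Consequently $f/g$ is a ratio of two polynomials with nonnegative coefficients, which is by definition a subtraction-free rational expression, and Theorem~\ref{th:sfre} follows at once. Thus the whole task reduces to proving the identity $g\,\Delta_J=f$ as rational functions (equivalently, as formal power series) in the edge weights.

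To prove $g\,\Delta_J=f$, I would first expand the minor combinatorially. Writing $I=\{i_1<\dots<i_k\}$ and using Definition~\ref{def:boundarymeasmatrix} together with~\eqref{eq:Mij}, the Leibniz expansion of $\Delta_J=\det(a_{i_t,j})_{j\in J}$ becomes a signed sum over families $\mathcal P=(P_1,\dots,P_k)$ of directed walks, where $P_t$ runs from the source $b_{i_t}$ to a boundary vertex $b_{\sigma(t)}$ for some bijection $\sigma\colon[k]\to J$. Each term carries three signs: the permutation sign of $\sigma$, the product of interlacing signs $(-1)^{s(i_t,\sigma(t))}$, and the product of winding signs $(-1)^{\wind(P_t)}$. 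The key topological input is a parity lemma asserting that the product of these three signs equals $(-1)^{\xing(\mathcal P)}$, where $\xing(\mathcal P)$ counts the total number of intersection points of the family $\mathcal P$ (self-intersections of individual walks together with crossings between distinct walks). This identification is where planarity and the winding index are genuinely used, and it rewrites the determinant as a crossing-signed sum.

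I would then multiply by $g=\sum_{\mathcal C}\wt(\mathcal C)$, the sum over conservative flows $\mathcal C$, to obtain
\[
g\,\Delta_J=\sum_{(\mathcal C,\mathcal P)}(-1)^{\xing(\mathcal P)}\,\wt(\mathcal C)\,\wt(\mathcal P),
\]
the sum taken over all pairs consisting of a conservative flow and a walk family of the above type. On the subset of pairs that fail to be vertex-disjoint---those in which some walk self-intersects, two walks cross, or a walk meets a cycle of $\mathcal C$---I would construct a weight-preserving, sign-reversing involution. The recipe is to locate, in a fixed canonical order on the vertices and on the steps of the walks, the first offending intersection and to perform a local surgery there: swapping the two outgoing continuations at that vertex, which either splices two walks together, detaches a cycle from a walk into the conservative part, or absorbs a cycle of $\mathcal C$ into a walk. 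Each such surgery reverses the sign $(-1)^{\xing(\mathcal P)}$ while leaving the multiset of edges, hence the total weight, unchanged. The fixed points are precisely the pairs $(\mathcal C,\mathcal P)$ that are fully vertex-disjoint, i.e., the flows from $I$ to $J$; each contributes $+\wt$, and summing gives $f$.

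The main obstacle is the construction of this involution and the verification that it is well defined and self-inverse. The delicate points are (i) choosing the canonical rule for the ``first'' intersection so that applying the surgery twice returns the original pair, and (ii) correctly managing the traffic between the two kinds of cycles---those created from overlapping walk segments and those belonging to the conservative flow $\mathcal C$---so that the swap respects the distinction between walks ending on the boundary and closed cycles. Here the hypothesis that $G$ is perfectly oriented is essential: at each interior vertex there is a unique incoming or unique outgoing edge, which rigidly controls how two strands can share a vertex (forcing a shared edge) and makes the swap operation canonical. The parity lemma of the previous step, while topological, is comparatively standard; the sign-reversing involution is the crux of the argument.
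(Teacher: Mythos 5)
Your overall strategy---reduce Theorem~\ref{th:sfre} to the identity $g\,\Delta_J=f$, expand the minor via the Leibniz formula and the power series \eqref{eq:Mij}, multiply by $g$, and cancel the non-vertex-disjoint terms with a weight-preserving, sign-reversing involution whose fixed points are the flows from $I$ to $J$---is exactly the paper's proof of Theorem~\ref{th:mainthm}, down to the case division of the surgery (tail swap between two walks versus moving a cycle between the conservative part and a walk). But there is a genuine gap: Theorem~\ref{th:sfre} is stated for an \emph{arbitrary} planar circular network, while your argument, as you yourself note when you call perfect orientation ``essential,'' applies only when $G$ is perfectly oriented. The paper closes this gap in Section~\ref{sec:general}: Postnikov's local moves (Proposition~\ref{prop:maketrivalent}) transform any planar circular network into a perfectly oriented one with the same boundary measurements under a suitable respecialization of weights, and Corollary~\ref{th:main-non-perfect} then yields the subtraction-free expression \eqref{eq:general-formula} in general. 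Without this reduction you have proved only a special case of the stated theorem.

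A second, more technical concern is your ``parity lemma,'' which asserts that the product of the permutation sign, the interlacing signs $(-1)^{s(i_t,\sigma(t))}$, and the winding signs equals $(-1)^{\xing(\mathcal P)}$ with $\xing(\mathcal P)$ the total number of geometric intersection points of the walk family. For walks in a graph this count is not well defined: distinct walks (and a walk with itself) typically meet along shared edges rather than at transverse points, so whether an overlap contributes an odd or even number of ``crossings'' depends on a resolution convention you have not specified; moreover your surgery preserves the multiset of edges, so you would also need to explain why the count changes parity at all. The paper avoids this by keeping the winding signs separate: Proposition~\ref{xing} converts only the permutation and interlacing signs into the chord-crossing number $\xing(\pi)$, which depends solely on the endpoint bijection, and the sign reversal of the involution is then checked directly---via Lemma~\ref{xingtailswap} together with the unwinding argument of Figure~\ref{fig:wind-tail-swap} for tail swaps, and via the observation that transferring a cycle changes a single winding index by $\pm 1$ in the remaining case. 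You would need either to adopt this bookkeeping or to supply precise conventions making your geometric crossing count well defined and provably reversed in parity by the surgery.
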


Postnikov's proof of Theorem~\ref{th:sfre} is inductive. In
Sections~\ref{sec:involution} and~\ref{sec:general}, we will give
an explicit combinatorial formula for the boundary measurements in
a planar circular network, providing a constructive proof.
Theorem~\ref{th:mainthm} gives the formula in the perfectly
oriented case, and Corollary~\ref{th:main-non-perfect} generalizes
this result, giving a formula for an arbitrary planar circular
network.

\begin{defn} [\cite{post}]
\label{def:xing-number} Let $\pi:I\to J$ be a bijection such that
$\pi(i)=i$ for all $i\in I\cap J$. A pair of indices
$(i_1,i_2)$, where $\{i_1<i_2\}\subset I\setminus J$, is called a
\emph{crossing}, an \emph{alignment}, or a \emph{misalignment}
of~$\pi$, if the two directed chords $[b_{i_1},b_{\pi(i_1)}]$ and
$[b_{i_2},b_{\pi(i_2)}]$ are arranged with respect to each other
as shown in Figure~\ref{fig:crossing_alignment_misalignment}.
Define the {\it crossing number\/} $\xing(\pi)$ of $\pi$ as the
number of crossings of~$\pi$.

\psset{unit=1pt}
\begin{figure}[ht]
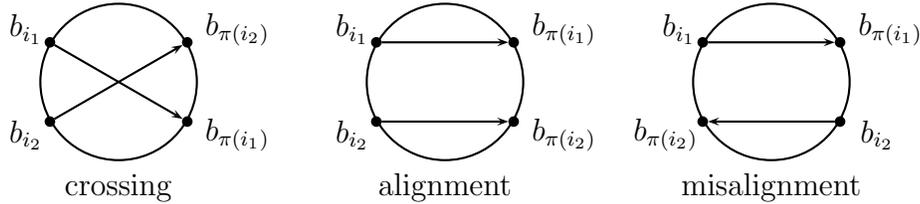

\pspicture(-50,-40)(50,40) \rput(0,-40){crossing}
\pscircle[linecolor=black](0,0){30}
%\psarc[linecolor=lightgray,linewidth=2pt](0,0){30}{150}{-150}
%\psarc[linecolor=lightgray,linewidth=2pt](0,0){30}{-30}{30}
\cnode*[linewidth=0,linecolor=black](25.98,15){2}{B1}
\cnode*[linewidth=0,linecolor=black](25.98,-15){2}{B2}
\cnode*[linewidth=0,linecolor=black](-25.98,-15){2}{B4}
\cnode*[linewidth=0,linecolor=black](-25.98,15){2}{B5}
\ncline{->}{B5}{B2} \ncline{->}{B4}{B1} \rput(-35,20){$b_{i_1}$}
\rput(45,20){$b_{\pi(i_2)}$} \rput(-35,-20){$b_{i_2}$}
\rput(45,-20){$b_{\pi(i_1)}$}
\endpspicture
\qquad
\pspicture(-50,-40)(50,40) \pscircle[linecolor=black](0,0){30}
\rput(0,-40){alignment}
\cnode*[linewidth=0,linecolor=black](25.98,15){2}{B1}
\cnode*[linewidth=0,linecolor=black](25.98,-15){2}{B2}
\cnode*[linewidth=0,linecolor=black](-25.98,-15){2}{B4}
\cnode*[linewidth=0,linecolor=black](-25.98,15){2}{B5}
\ncline{->}{B5}{B1} \ncline{->}{B4}{B2} \rput(-35,20){$b_{i_1}$}
\rput(45,20){$b_{\pi(i_1)}$} \rput(-35,-20){$b_{i_2}$}
\rput(45,-20){$b_{\pi(i_2)}$}
\endpspicture
\qquad
\pspicture(-50,-40)(50,40) \pscircle[linecolor=black](0,0){30}
\rput(0,-40){misalignment}
\cnode*[linewidth=0,linecolor=black](25.98,15){2}{B1}
\cnode*[linewidth=0,linecolor=black](25.98,-15){2}{B2}
\cnode*[linewidth=0,linecolor=black](-25.98,-15){2}{B4}
\cnode*[linewidth=0,linecolor=black](-25.98,15){2}{B5}
\ncline{->}{B5}{B1} \ncline{<-}{B4}{B2} \rput(-35,20){$b_{i_1}$}
\rput(45,20){$b_{\pi(i_1)}$} \rput(-40,-20){$b_{\pi(i_2)}$}
\rput(40,-20){$b_{i_2}$}
\endpspicture
\caption{Crossings, alignments, and misalignments}
\label{fig:crossing_alignment_misalignment}
\end{figure}
\psset{unit=1pt}
\end{defn}

\begin{lemma}\label{alg-xing}
For distinct $i_1, i_2, j_1, j_2\in [n]$,
the chords $[b_{i_1},b_{j_1}]$ and $[b_{i_2},b_{j_2}]$ cross if
and only if \[(i_1-j_2)(j_2-j_1)(j_1-i_2)(i_2-i_1)<0.\]
\end{lemma}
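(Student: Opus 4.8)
The plan is to separate the geometric content of the statement from the algebraic bookkeeping. Since $b_1,\dots,b_n$ lie on the boundary circle in the cyclic order $1,2,\dots,n$, the four points $b_{i_1},b_{j_1},b_{i_2},b_{j_2}$ are in convex position, so the two chords meet in the interior of the disk if and only if their endpoints \emph{interleave} around the circle. Concretely, writing the four indices in increasing order as $p<q<r<s$, the chords cross precisely when they are $\{p,r\}$ and $\{q,s\}$ (the alternating pairing), as opposed to the separated pairing $\{p,q\},\{r,s\}$ or the nested pairing $\{p,s\},\{q,r\}$. This reduces the left-hand (geometric) side to a condition on the linear order of the four labels, which is exactly the kind of condition the right-hand side is built to detect.

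For the algebraic side, set $P=(i_1-j_2)(j_2-j_1)(j_1-i_2)(i_2-i_1)$; I want $P<0$ iff the alternating pairing occurs. First I would record that $P$ is \emph{invariant} under reversing either chord, $(i_1,j_1)\mapsto(j_1,i_1)$ or $(i_2,j_2)\mapsto(j_2,i_2)$, and under swapping the two chords, $(i_1,j_1,i_2,j_2)\mapsto(i_2,j_2,i_1,j_1)$. Each such move matches the four linear factors of $P$ with the four factors of the transformed expression so that each matched pair differs by a sign; since there are four factors, the four sign changes cancel and $P$ is preserved. These invariances mirror the facts that geometric crossing is insensitive to the orientation of the chords (cf.\ the crossing case of Figure~\ref{fig:crossing_alignment_misalignment}) and to their labeling as ``first'' and ``second,'' so the two sides transform compatibly.

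Using these symmetries I would normalize to $i_1<j_1$, $i_2<j_2$, and $i_1<i_2$, so that $i_1=p$ is the global minimum. The three pairings then correspond to the three linear orders
\[
i_1<i_2<j_1<j_2\quad(\text{crossing}),\qquad
i_1<j_1<i_2<j_2\quad(\text{separated}),\qquad
i_1<i_2<j_2<j_1\quad(\text{nested}),
\]
and a routine sign count of the four factors of $P$ gives $P<0$ in the crossing case and $P>0$ in the other two. Combining this with the invariance of \emph{both} conditions under the normalization moves finishes the proof.

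The step I expect to require the most care is not any one computation but the bookkeeping that keeps the geometric and algebraic sides aligned: one must be sure that ``crossing,'' a cyclic and orientation-free notion, is faithfully encoded by a formula written in the ambient \emph{linear} order $1<\dots<n$. The invariance of $P$ under chord reversal and chord swapping is precisely what guarantees this compatibility, so verifying those invariances cleanly — rather than the final three-case sign check — is the crux of the argument.
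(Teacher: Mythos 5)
Your proof is correct. The paper itself offers no argument here --- its ``proof'' reads in full ``This is a simple verification, left to the reader'' --- so your write-up simply supplies the verification the author omits, and it does so efficiently: checking that the product $(i_1-j_2)(j_2-j_1)(j_1-i_2)(i_2-i_1)$ is invariant under reversing either chord and under swapping the two chords (each move negates all four factors, so the sign is preserved) legitimately cuts the case analysis down to the three linear orders you list, and your sign counts in those three cases are right. The one point worth stating explicitly in a final version is the geometric reduction you invoke at the start --- that for four distinct points on a circle the chords cross iff the endpoint pairs interleave in the cyclic order, which for distinct indices in $[n]$ is equivalent to the alternating pairing in the linear order --- but this is standard and your use of it is sound.
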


\begin{proof}
This is a simple verification, left to the reader.
\end{proof}

Lemma~\ref{alg-xing} immediately leads to the following corollary.

\begin{cor}\label{misalign}
Let $\pi:I\to J$ be a bijection such that $\pi(i)=i$ for all $i\in
I\cap J$. For $\{i_1<i_2\}\subset I\setminus J$, the following are
equivalent:
\begin{enumerate}
\item $(i_1, i_2)$ is a misalignment; \item the chords
$[b_{i_1},b_{i_2}]$ and $[b_{\pi(i_1)},b_{\pi(i_2)}]$ cross; \item
$(i_1-\pi(i_2))(\pi(i_2)-i_2))(i_2-\pi(i_1))(\pi(i_1)-i_1)<0$.
\end{enumerate}
\end{cor}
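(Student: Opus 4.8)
The plan is to prove the chain $(1)\Leftrightarrow(2)\Leftrightarrow(3)$ by separating the purely algebraic equivalence from the geometric one. First I would record how the hypotheses pin down the four indices involved. Since $\pi$ fixes $I\cap J$ pointwise and is injective, an index $i\in I\setminus J$ cannot be sent into $I\cap J$ (otherwise its image would be a fixed point distinct from it, contradicting injectivity), so $\pi(i_1),\pi(i_2)\in J\setminus I$. Together with $i_1\neq i_2$ and the disjointness of $I\setminus J$ and $J\setminus I$, this makes $i_1,i_2,\pi(i_1),\pi(i_2)$ four distinct elements of $[n]$, which is exactly the hypothesis needed to invoke Lemma~\ref{alg-xing}.

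For $(2)\Leftrightarrow(3)$ I would simply specialize Lemma~\ref{alg-xing} to the two chords named in $(2)$. Taking those chords to be $[b_{i_1},b_{i_2}]$ and $[b_{\pi(i_1)},b_{\pi(i_2)}]$ amounts to the substitution $(i_1,j_1,i_2,j_2)\mapsto(i_1,i_2,\pi(i_1),\pi(i_2))$ in the lemma, under which the crossing criterion becomes $(i_1-\pi(i_2))(\pi(i_2)-i_2)(i_2-\pi(i_1))(\pi(i_1)-i_1)<0$, which is verbatim condition $(3)$. So this step is immediate and requires no further work.

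The substance is $(1)\Leftrightarrow(2)$. Here I would invoke the elementary fact that four distinct points on a circle admit exactly three pairings into two chords, of which exactly one yields crossing chords. For our four points the three pairings are $\{i_1,\pi(i_1)\}\sqcup\{i_2,\pi(i_2)\}$ (the transport chords of $\pi$), $\{i_1,i_2\}\sqcup\{\pi(i_1),\pi(i_2)\}$ (source--source and target--target), and $\{i_1,\pi(i_2)\}\sqcup\{i_2,\pi(i_1)\}$. I would then read off from the three panels of Figure~\ref{fig:crossing_alignment_misalignment} which pairing is the crossing one in each type: in the crossing panel the transport chords themselves cross; in the alignment panel it is the third pairing that crosses; and in the misalignment panel it is precisely the source--source/target--target chords that appear as the two diagonals and cross. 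Since exactly one pairing can cross, this simultaneously confirms that the three types are mutually exclusive and exhaustive and identifies misalignment as the unique case in which the chords of $(2)$ cross, giving $(1)\Leftrightarrow(2)$ and hence the corollary.

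The main obstacle, and really the only place where care is needed, is justifying that the three schematic pictures exhaust all possibilities, since a priori the four labelled points have more than three cyclic arrangements around the oriented disk. The ``exactly one of three pairings crosses'' formulation is exactly what removes this worry: rather than checking each cyclic order by hand, I attach to each type the statement ``pairing $X$ is the crossing one,'' and the trichotomy of pairings then forces exhaustiveness automatically. If one prefers to avoid appealing to the figure, the same conclusion can be obtained algebraically by writing down the Lemma~\ref{alg-xing} sign condition for all three pairings and using that, for four distinct indices, exactly one of these three products is negative; matching the negative product in the misalignment case to the sign pattern of $(3)$ then closes the argument.
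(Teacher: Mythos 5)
Your proof is correct and follows the paper's route: the paper simply states that the corollary "immediately" follows from Lemma~\ref{alg-xing}, which is exactly your step $(2)\Leftrightarrow(3)$ via the substitution $(i_1,j_1,i_2,j_2)\mapsto(i_1,i_2,\pi(i_1),\pi(i_2))$. Your additional care — checking that the four indices are distinct (so the lemma applies) and using the "exactly one of three pairings of four concyclic points crosses" trichotomy to pin down $(1)\Leftrightarrow(2)$ — correctly fills in the geometric details the paper treats as self-evident.
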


We provide a new proof of the following result.

\begin{prop}[\cite{post}]
\label{xing} Let $I$ index the boundary sources of a planar
circular network~$N$ and let $J\subseteq [n]$, with $|J|=|I|$.
Then
\begin{equation}
\label{eq:delta(A(N))}
\Delta_J(A(N))=\sum_{\pi:I\rightarrow
J}(-1)^{\xing(\pi)}\prod_{i\in I}M_{i,\pi(i)},
\end{equation}
the sum over all bijections $\pi$ from $I$ to $J$.
\end{prop}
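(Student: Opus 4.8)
The plan is to expand the minor by the Leibniz formula and to match the result term-by-term against the right-hand side of~\eqref{eq:delta(A(N))}, thereby reducing the proposition to a purely combinatorial statement about signs. Writing $I=\{i_1<\cdots<i_k\}$ and $J=\{j_1<\cdots<j_k\}$, I encode a bijection $\pi\colon I\to J$ by the permutation $\tau\in S_k$ with $\pi(i_t)=j_{\tau(t)}$, so that
\[
\Delta_J(A(N))=\sum_{\tau\in S_k}\operatorname{sgn}(\tau)\prod_{t}a_{i_t,\,j_{\tau(t)}}=\sum_{\pi\colon I\to J}\operatorname{sgn}(\tau)\,(-1)^{\sum_{i\in I}s(i,\pi(i))}\prod_{i\in I}M_{i,\pi(i)}.
\]
Since both this sum and the one in~\eqref{eq:delta(A(N))} are indexed by the bijections $\pi$ and carry the identical factor $\prod_{i\in I}M_{i,\pi(i)}$, it suffices to prove, for every $\pi$, the sign identity
\[
\operatorname{sgn}(\tau)\,(-1)^{\sum_{i\in I}s(i,\pi(i))}=(-1)^{\xing(\pi)}.
\]

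To establish this identity I would express all three exponents as sums over unordered pairs in $I$. Using $\operatorname{sgn}(\tau)=(-1)^{\operatorname{inv}(\tau)}$, the inversion count is $\operatorname{inv}(\tau)=\#\{a<b\text{ in }I:\pi(a)>\pi(b)\}$, while $\sum_{i\in I}s(i,\pi(i))$ counts pairs $(i,i')$ with $i'\in I$ lying strictly between $i$ and $\pi(i)$; and $\xing(\pi)$ is itself a sum of indicators over pairs $\{a<b\}$. I would therefore fix a pair $\{a<b\}\subseteq I$ and compare, modulo~$2$, its total contribution
\[
[\pi(a)>\pi(b)]+[\,b\text{ strictly between }a,\pi(a)\,]+[\,a\text{ strictly between }b,\pi(b)\,]
\]
with its crossing indicator, so that the full identity follows by summing the pairwise congruences.

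The decisive case is a pair $\{a<b\}\subseteq I\setminus J$ with $a,b,\pi(a),\pi(b)$ four distinct indices. Here Lemma~\ref{alg-xing} (equivalently Corollary~\ref{misalign}), applied with $(i_1,j_1,i_2,j_2)=(a,\pi(a),b,\pi(b))$, converts the geometric crossing of the chords $[b_a,b_{\pi(a)}]$ and $[b_b,b_{\pi(b)}]$ into the inequality $(a-\pi(b))(\pi(b)-\pi(a))(\pi(a)-b)(b-a)<0$. Since $a<b$, the crossing indicator is the parity of the number of negative factors among $a-\pi(b)$, $\pi(b)-\pi(a)$, $\pi(a)-b$; using that $\pi(a),\pi(b)\neq b$ and $\neq a$ to replace complementary indicators, a short computation shows this parity equals the three-term sum displayed above. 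When instead a fixed point occurs, say $\pi(a)=a$, the associated chord is degenerate and the pair contributes $0$ modulo~$2$, which is precisely what collapses the count onto pairs in $I\setminus J$ in the situation of Definition~\ref{def:xing-number}.

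I expect the crux to be the sign bookkeeping in the remaining degenerate configurations: pairs meeting $I\cap J$ and, more delicately, pairs for which the four indices $a,b,\pi(a),\pi(b)$ are not all distinct, where the two chords share an endpoint and the geometric crossing convention must be invoked with care before Lemma~\ref{alg-xing} applies. One must also reconcile the sum over \emph{all} bijections in~\eqref{eq:delta(A(N))} with the definition of $\xing$, verifying that each bijection—whether or not it fixes $I\cap J$—receives exactly the sign dictated by the identity. Once every pair type is matched, summing the pairwise congruences over all $\{a<b\}\subseteq I$ yields the sign identity, and with it the proposition.
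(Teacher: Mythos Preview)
Your approach is essentially the paper's: expand the minor, reduce to the sign identity $(-1)^{\operatorname{inv}(\pi)}\prod_i(-1)^{s(i,\pi(i))}=(-1)^{\xing(\pi)}$, and verify it by computing the contribution of each pair $\{i_1<i_2\}\subset I$ as $\operatorname{sgn}[(i_2-i_1)(i_1-\pi(i_2))(\pi(i_2)-\pi(i_1))(\pi(i_1)-i_2)]$ and invoking Lemma~\ref{alg-xing}.

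The one place where the paper is cleaner than your outline is in the handling of the ``degenerate'' cases you flag. The paper observes at the outset that $\prod_{i\in I}M_{i,\pi(i)}=0$ unless $\pi$ fixes every element of $I\cap J$ (if $i\in I\cap J$ and $\pi(i)\neq i$, then some $i'\neq i$ has $\pi(i')=i$, and a boundary source has no incoming edge, so $M_{i',i}=0$). Hence there is no need to verify the sign identity for bijections not fixing $I\cap J$. Once you restrict to such $\pi$, your second worry also evaporates: for $a,b\in I\setminus J$ the values $a,b,\pi(a),\pi(b)$ are automatically distinct (since $\pi(a),\pi(b)\in J$), so Lemma~\ref{alg-xing} applies directly; and for a pair meeting $I\cap J$ the sign product collapses to $+1$, matching the fact that such pairs are excluded from $\xing(\pi)$ by definition.
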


\begin{proof}
Taking the appropriate determinant, we see that
\[\Delta_J(A(N))=\sum_{\pi:I\rightarrow
J}(-1)^{\operatorname{inv}(\pi)}\prod_{i\in
I}(-1)^{s(i,\pi(i))}M_{i,\pi(i)}, \] where $s(i,\pi(i))$ is
defined as in Definition~\ref{def:boundarymeasmatrix} and
$\operatorname{inv}(\pi)$ is the number of inversions of $\pi$.
Here, an \emph{inversion} of $\pi$ is a pair $(i_1,i_2)$ with
$i_1<i_2$ and $\pi(i_1)>\pi(i_2)$.  Note that $\prod_{i\in
I}M_{i,\pi(i)}=0$ unless $\pi(i)=i$ for all $i\in I\cap J$. Thus,
we wish to show that if $\pi$ fixes the elements in $I\cap J$,
then
\begin{equation}\label{eq:inv-xing}(-1)^{\xing(\pi)}=(-1)^{\operatorname{inv}(\pi)}\prod_{i\in
I}(-1)^{s(i,\pi(i))}.\end{equation}
Consider the right-hand side of (\ref{eq:inv-xing}).
Each pair $(i_1,i_2)$ with $i_1<i_2$
contributes a factor of $\operatorname{sgn}((\pi(i_2)-\pi(i_1))$ to
$(-1)^{\operatorname{inv}(\pi)}$. Furthermore, $i_1$
contributes a factor of
$\operatorname{sgn}((i_1-i_2)(i_1-\pi(i_2))
=-\operatorname{sgn}(i_1-\pi(i_2))$
to $(-1)^{s(i_2,\pi(i_2))}$, since this product is negative if and
only if $\pi(i_2)<i_1<i_2$.  Similarly, $i_2$ contributes a factor
of
$\operatorname{sgn}((i_2-i_1)(i_2-\pi(i_1)))
=-\operatorname{sgn}((i_2-i_1)(\pi(i_1)-i_2))$
to $(-1)^{s(i_1,\pi(i_1))}$. Thus, the total contribution by the
pair $(i_1,i_2)$ is
\begin{equation*}
\operatorname{sgn}[(i_2-i_1)(i_1-\pi(i_2))(\pi(i_2)-\pi(i_1))(\pi(i_1)-i_2)].
\end{equation*}
Taking the product over all pairs $\{i_1<i_2\}$, we
get $(-1)^{\xing(\pi)}$, by Lemma~\ref{alg-xing}.
\end{proof}

\begin{lemma}
\label{xingtailswap} Let $\pi:I\rightarrow J$ be a bijection such
that $\pi(i)=i$ for all $i\in I\cap J$.  For $k,l\in I\setminus J$
with $k<l$, let $s_{\pi(k),\pi(l)}$ denote the transposition of
the boundary vertices $b_{\pi(k)}$ and $b_{\pi(l)}$, and let
$\pi^*=s_{\pi(k),\pi(l)}\circ\pi$. Then
\begin{equation*}(-1)^{\xing(\pi^*)}=
\begin{cases}{(-1)^{\xing(\pi)+1}} & \text{if $(k,l)$ is a crossing or an
alignment;}\\
{(-1)^{\xing(\pi)}} & \text{if $(k,l)$ is a
misalignment.}
\end{cases}
\end{equation*}
\end{lemma}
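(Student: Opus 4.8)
The plan is to deduce the lemma from the sign identity \eqref{eq:inv-xing} established in the proof of Proposition~\ref{xing}, namely $(-1)^{\xing(\sigma)}=(-1)^{\operatorname{inv}(\sigma)}\prod_{i\in I}(-1)^{s(i,\sigma(i))}$, applied to both $\sigma=\pi$ and $\sigma=\pi^*$. Since $\pi^*$ is obtained from $\pi$ by interchanging the two values $\pi(k)$ and $\pi(l)$ (and one checks directly, using that $\pi(k),\pi(l)\in J\setminus I$, that $\pi^*$ still fixes $I\cap J$, so that $\xing(\pi^*)$ is defined), taking the ratio of the two instances of \eqref{eq:inv-xing} reduces the problem to two parity computations: the change in $\operatorname{inv}$ and the change in the exponent $\sum_{i\in I}s(i,\cdot)$.

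First I would dispose of the inversion term. Interchanging two entries in the word $(\pi(i))_{i\in I}$ is an odd operation: the pair $(k,l)$ flips its inversion status, while each index $m\in I$ strictly between $k$ and $l$ alters the statuses of the pairs $(m,k)$ and $(m,l)$ by an even total amount. Hence $(-1)^{\operatorname{inv}(\pi^*)}=-(-1)^{\operatorname{inv}(\pi)}$, contributing a single factor of $-1$. What then remains is to evaluate $S:=s(k,\pi(l))+s(l,\pi(k))+s(k,\pi(k))+s(l,\pi(l))\pmod 2$, since the $s$-factors for indices other than $k,l$ are unaffected by the swap and cancel, and modulo $2$ subtraction is addition.

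To compute $S$, write $a=\pi(k)$, $b=\pi(l)$, and recall that $k<l$ lie in $I$ while $a,b$ lie in $J\setminus I$, so the four indices are distinct and $a,b\notin I$. Expanding each $s(\cdot,\cdot)$ as the number of elements of $I$ lying strictly inside one of the four open intervals $(k,a),(l,b),(k,b),(l,a)$, I would use the key observation that each of the four endpoints $k,l,a,b$ is an endpoint of exactly two of these intervals. Consequently, for any point $m$ distinct from the four endpoints, the parity of the number of these intervals containing $m$ does not change as $m$ sweeps across any endpoint; it is therefore constant, equal to its value at $\pm\infty$, namely $0$. Since $a,b\notin I$, the only elements of $I$ contributing an odd amount to $S$ are $m=k$ and $m=l$, and a direct check gives $S\equiv[a\in(k,l)]+[b\in(k,l)]\pmod 2$; that is, $S$ is odd precisely when exactly one of $\pi(k),\pi(l)$ lies strictly between $k$ and $l$.

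It remains to match this condition with the trichotomy of the lemma, which I expect to be the most delicate step. By Lemma~\ref{alg-xing} in the form of Corollary~\ref{misalign}, the pair $(k,l)$ is a misalignment exactly when $(k-b)(b-l)(l-a)(a-k)<0$; grouping the factors as $[(k-b)(b-l)]\cdot[(l-a)(a-k)]$ shows that this product is negative precisely when exactly one of $a,b$ lies in $(k,l)$, i.e.\ precisely when $S$ is odd. Since the three possibilities---crossing, alignment, and misalignment---exhaust all configurations of two distinct oriented chords, the event that $(k,l)$ is a crossing or an alignment is exactly the complement of misalignment, namely the case in which both or neither of $a,b$ lies in $(k,l)$, where $S\equiv 0$. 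Collecting the signs, a misalignment gives total factor $(-1)^{1+S}=+1$ and hence $(-1)^{\xing(\pi^*)}=(-1)^{\xing(\pi)}$, while a crossing or an alignment gives $(-1)^{1+S}=-1$ and hence $(-1)^{\xing(\pi^*)}=(-1)^{\xing(\pi)+1}$, as required.
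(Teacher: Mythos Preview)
Your argument is correct. It reaches the same destination as the paper's proof but by a genuinely different route. The paper works directly with the product formula implicit in Lemma~\ref{alg-xing}: writing $(-1)^{\xing(\sigma)}$ as the product over pairs $i_1<i_2$ of $\operatorname{sgn}\bigl[(i_1-\sigma(i_2))(\sigma(i_2)-\sigma(i_1))(\sigma(i_1)-i_2)\bigr]$, it multiplies the expressions for $\pi$ and $\pi^*$ and observes that every factor coming from a pair other than $(k,l)$ cancels (each third index $m$ contributes identical factors via the pairs $\{m,k\}$ and $\{m,l\}$), leaving exactly the four-term product appearing in Corollary~\ref{misalign}. Your approach instead invokes the identity \eqref{eq:inv-xing}, splits the sign change into a change in $\operatorname{inv}$ (always odd) and a change in $\sum_i s(i,\cdot)$, and then identifies the latter parity with the misalignment condition via the same corollary. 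The paper's proof is more compact once one grants the cancellation step, whereas your version spells out the combinatorics more explicitly and avoids manipulating products of signs; both ultimately hinge on Corollary~\ref{misalign} to translate the algebraic condition back into the crossing/alignment/misalignment trichotomy.
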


\begin{proof}
Applying Lemma~\ref{alg-xing} and simplifying, we obtain:
\begin{eqnarray*}
(-1)^{\xing(\pi)}(-1)^{\xing(\pi^*)} &=&
\operatorname{sgn}\left[\prod_{i_1<i_2}(i_1-\pi(i_2))(\pi(i_2)-\pi(i_1))(\pi(i_1)-i_2)\right]
\\ & & \cdot \operatorname{sgn}\left[\prod_{i_1<i_2}(i_1-\pi^*(i_2))(\pi^*(i_2)-\pi^*(i_1))(\pi^*(i_1)-i_2)\right]\\
&=&
\operatorname{sgn}\left[(k-\pi(l))(\pi(l)-l)(l-\pi(k))(\pi(k)-k)\right],
\end{eqnarray*}
and the lemma follows from Corollary~\ref{misalign}.
\end{proof}

\section{Proof of the main theorem in the perfectly oriented case}
\label{sec:involution}

Throughout this section, we assume that~$G$ is a perfectly
oriented graph.  Recall that $I=\{i_1<\cdots<i_k\}$ indexes the
set of boundary sources of $G$.

\begin{defn}\label{def:flow}
A subset $F$ of (distinct) edges in a perfectly oriented planar
circular directed graph $G$ is called a \emph{flow} if, for each
interior vertex $v$ in~$G$, the number of edges of $F$ that arrive
at~$v$ is equal to the number of edges of~$F$ that leave from~$v$.

A flow~$C$ is \emph{conservative} if it contains no edges incident
to the boundary. We denote by~$\mathcal{C}(G)$ the set of all
conservative flows in~$G$.

Let $J$ be a $k$-element subset of $[n]$.  We say that a flow $F$
 is a \emph{flow from $I$ to~$J$} if each boundary source~$b_i$ is
 connected by a walk in $F$ to a
(necessarily unique) boundary vertex~$b_j$ with $j\in J$. If $G$
is perfectly oriented, we denote by $\mathcal{F}_{J}(G)$ the set
of all flows from $I$ to~$J$.

The \emph{weight} of a flow~$F$, denoted~$\wt(F)$, is by
definition the product of the weights of all edges in~$F$. A flow
with no edges has weight 1.
\end{defn}

We note that each flow is a union of $k$ non-intersecting
self-avoiding walks, each connecting a boundary source~$b_i$
($i\in I$) to a distinct boundary vertex $b_j$ ($j\in J$),
together with a (possibly empty) collection of pairwise disjoint
cycles, none of which intersect any of the walks.  Further, each
flow lies in precisely one of the sets $\mathcal{F}_J(G)$.  In
particular, for a conservative flow, each of the $k$ walks between
boundary vertices is trivial, and
$\mathcal{C}(G)=\mathcal{F}_I(G)$.

Using the above definitions, we can restate
Theorem~\ref{th:main-intro} as follows.

\begin{thm}
\label{th:mainthm} Let $N=(G,x)$ be a perfectly oriented planar
circular network. Then the maximal minors of the boundary
measurement matrix $A(N)$ are given by
\begin{equation}
\label{eq:main-formula}
\Delta_J(A(N))=\frac{\displaystyle\sum_{F\in\mathcal{F}_J(G)}
\wt(F)}{\displaystyle\sum_{C\in\mathcal{C}(G)} \wt(C)}.
\end{equation}
\end{thm}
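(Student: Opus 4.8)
The plan is to prove the formula by establishing the numerator and denominator identities through a sign-reversing involution. I would start from the expression for $\Delta_J$ provided by Proposition~\ref{xing}, namely $\Delta_J=\sum_{\pi:I\to J}(-1)^{\xing(\pi)}\prod_{i\in I}M_{i,\pi(i)}$. Each boundary measurement $M_{i,\pi(i)}$ is itself a signed sum over walks weighted by $(-1)^{\wind(P)}$. The first step is to clear the denominator: multiply both sides of the desired formula \eqref{eq:main-formula} by $g=\sum_{C\in\mathcal{C}(G)}\wt(C)$ and show the equivalent identity
\begin{equation*}
\Big(\sum_{\pi:I\to J}(-1)^{\xing(\pi)}\prod_{i\in I}M_{i,\pi(i)}\Big)\cdot\Big(\sum_{C\in\mathcal{C}(G)}\wt(C)\Big)=\sum_{F\in\mathcal{F}_J(G)}\wt(F).
\end{equation*}
Since each $M_{i,\pi(i)}$ is a power series, I would first expand the left-hand side as a signed generating function over tuples consisting of a bijection $\pi$, a choice of (possibly self-intersecting, winding) walk $P_i:b_i\leadsto b_{\pi(i)}$ for each $i\in I$, and a conservative flow $C$. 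The goal is to show that after cancellation only the flow terms survive, each with coefficient $+1$.

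The key idea is to build a single sign-reversing involution $\iota$ on the set of surviving combinatorial objects. Each term on the left is a formal object $(\pi,(P_i)_i,C)$ carrying the sign $(-1)^{\xing(\pi)}\prod_i(-1)^{\wind(P_i)}$ and weight $\wt(C)\prod_i\wt(P_i)$. The involution should pair up any object that is \emph{not} a genuine flow from $I$ to $J$ with another object of the same weight but opposite sign, so that all such terms cancel. The natural place to look for a reversal is a point where the collection of walks and cycles fails to be vertex-disjoint and self-avoiding: the first (in some fixed total order on vertices and on edge-traversals) bad intersection. At such a point one can either swap the tails of two walks passing through it, or toggle a loop into or out of the conservative flow, or absorb a repeated portion of a walk into a cycle. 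Crucially, each such local modification must (i) preserve the total weight, and (ii) flip the overall sign. Lemma~\ref{xingtailswap} is exactly the tool that controls how $(-1)^{\xing(\pi)}$ changes under a tail swap $\pi\mapsto s_{\pi(k),\pi(l)}\circ\pi$, so the tail-swapping branch of the involution will flip the sign precisely in the crossing/alignment cases; the interplay with the winding index $(-1)^{\wind}$ for the walk portions must be checked to confirm the sign flips in the misalignment and self-intersection cases as well.

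After defining $\iota$, the steps are: (1) verify $\iota$ is a well-defined involution on the non-flow objects; (2) verify it is weight-preserving and sign-reversing, so those contributions cancel; (3) identify the fixed points, i.e.\ the objects $\iota$ does not move, as exactly the genuine flows from $I$ to $J$ (where each walk is self-avoiding, the walks are pairwise vertex-disjoint, and they avoid the cycles of $C$); and (4) confirm that each such fixed point has sign $+1$, using the fact that for a self-avoiding walk $P$ in a perfectly oriented graph one has $\wind(P)$ of fixed parity controlled by $\xing(\pi)$, so the signs $(-1)^{\xing(\pi)}$ and $(-1)^{\wind(P_i)}$ combine to $+1$. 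Since a flow in $\mathcal{F}_J(G)$ is precisely a disjoint union of such walks together with a conservative subflow, the surviving terms reproduce $\sum_{F\in\mathcal{F}_J(G)}\wt(F)$ exactly.

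The main obstacle I expect is defining the involution \emph{consistently} at the various types of failure and proving it is genuinely an involution: the rule for choosing the ``first'' bad feature must be canonical enough that applying the modification and re-running the rule returns the original object, and one must show that the three branches (tail swap between two walks, loop-erasure of a self-intersection into the cycle collection, and adding/removing a conservative cycle) do not interfere with one another or create new earlier defects. Getting the sign bookkeeping to come out uniformly $-1$ across all branches---reconciling the $\xing$ change from Lemma~\ref{xingtailswap} with the winding-index change from rerouting a walk---is the delicate point, and is where the perfect-orientation hypothesis (every self-intersection forces a repeated edge, so intersections can be resolved cleanly) will be essential.
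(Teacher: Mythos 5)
Your proposal follows essentially the same route as the paper: reduce via Proposition~\ref{xing} to the cancellation identity obtained by clearing the denominator, then construct a sign-reversing, weight-preserving involution on triples $(\pi,(P_i),C)$ that fail to form a flow, with the same three branches (tail swap governed by Lemma~\ref{xingtailswap}, and moving a cycle between a walk and the conservative flow), and with the same delicate point of reconciling the change in $(-1)^{\xing}$ with the change in the winding indices. The paper resolves that delicate point by a topological unwinding argument reducing any tail swap to four model pictures, exactly where your plan says the work remains.
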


\begin{example}
Consider the planar circular network $N$ in
Figure~\ref{fig:mainex}, with $I=\{1,4\}$.  For $J=\{1,5\}$, let
us describe the set of flows $\mathcal{F}_{\{1,5\}}(G)$. The
boundary vertex $b_1$ must be connected to itself by the trivial
walk $b_1\leadsto b_1$. Together with the unique self-avoiding
walk $P:b_4\leadsto b_5$ of weight $a_4w_2fy_2a_5$, this gives a
flow from $\{1,4\}$ to $\{1,5\}$. There is one additional flow,
consisting of $P$ and the cycle of weight $Z=z_1z_2z_3z_4$ (along
with the trivial walk $b_1\leadsto b_1$). Thus, the numerator
of~\eqref{eq:main-formula} is
\[
\sum_{F\in\mathcal{F}_{\{1,5\}}(G)}\wt(F) =a_4w_2fy_2a_5(1+Z).
\]
The only cycles in the network $N$ are those of weights $W$, $Y$,
$Z$, and $T$.  Since conservative flows in $G$ are unions of
disjoint cycles, we have
\begin{eqnarray*}
\sum_{C\in\mathcal{C}(G)} \wt(C) &=& 1+W+Y+Z+T+WZ+WY+YZ+WYZ+ZT\\
&=&(1+Z)[(1+W)(1+Y)+T].
\end{eqnarray*}
Consequently,
\[
\Delta_{\{1,5\}}(A(N))=\frac{a_4w_2fy_2a_5(1+Z)}{(1+Z)[(1+W)(1+Y)+T]}=\frac{a_4w_2fy_2a_5}{(1+W)(1+Y)+T}.
\]
\end{example}

\begin{proof}[Proof of Theorem \ref{th:mainthm}]
For a bijection $\pi:I\rightarrow J$, let $\mathcal{P}_\pi$ denote
the set of all (possibly intersecting) collections of walks
$\mathbf{P}\!=\!(P_i)_{i\in I}$ connecting $I$ and $J$ in accordance
with~$\pi$:
\[
\mathcal{P}_\pi=\{\mathbf{P}=(P_i:b_{i}\leadsto b_{\pi(i)})_{i\in I}\}.
\]
In view of \eqref{eq:Mij} and~\eqref{eq:delta(A(N))},
we can rewrite the claim \eqref{eq:main-formula} as
\begin{equation}
\label{eq:main-claim} \sum_{C\in\mathcal{C}(G)} \
\sum_{\pi:I\rightarrow J} \ \sum_{\P\in \mathcal{P}_\pi} \wt(C,\P)
= \sum_{F\in\mathcal{F}_J(G)}\wt(F),
\end{equation}
where $\wt(C,\P)$, for $\P\in \mathcal{P}_\pi$, is defined by
\[
\wt(C,\P)=\wt(C)(-1)^{\xing(\pi)}\prod_{i\in
I}(-1)^{\wind(P_i)}\wt(P_i).
\]
Note that if $C$ and $\P$ form a flow~$F$ from $I$ to~$J$, then
$\xing(\pi)=0$ and $\wind(P_i)=0$ for all $i$, so that
$\wt(C,\P)=\wt(F)$. Hence \eqref{eq:main-claim} can be restated as
saying that all terms on its left-hand side cancel except for the
ones for which $C$ and $\P$ form a flow from~$I$ to~$J$. It
remains to construct a sign-reversing involution proving this
claim. More precisely, we need an involution $\varphi$ on the set
of pairs $(C,\P)$ such that \begin{enumerate}[(i)] \item
$C\in\mathcal{C}(G)$ is a conservative flow, \item $\P$ is a
collection of $k=|I|$ walks connecting $I$ and~$J$, and \item $C$
and $\P$ do \emph{not} form a boundary flow.\end{enumerate}
Furthermore, $\varphi$ must satisfy
$\wt(\varphi(C,\P))=-\wt(C,\P)$.

%  SIGN-REVERSING INVOLUTION BEGIN

For a pair $(C,\P)$ satisfying (i)-(iii), we define
$\varphi(C,\P)=(C^*,\P^*)$ as follows. Let $\P=(P_i)_{i\in
I}\in\mathcal{P}_\pi$, with $\pi:I\to J$ a bijection. Choose the
smallest $i\in I$ such that $P_i$ is not self-avoiding or has a
common vertex with $C$ or with some $P_{i'}$ with $i'>i$. (Such an
$i$ exists by the assumptions we made regarding~$(C,\P)$.)

Let $P_i=(e_1,\ldots, e_m)$.  Choose the smallest $q$ such that
the edge $e_q$ lies in $C$ or in some $P_{i'}$ with $i'>i$, or
$e_q=e_r$ for some $r>q$.

\begin{itemize}
\item If $e_q$ lies in some $P_{i'}$ with $i'>i$, choose the
smallest such $i'$. (This case allows for the possibility that
$P_i$ intersects itself or $C$ at $e_q$.) We will then swap the
tails of $P_i$ and $P_{i'}$ as follows. Let $P_{i'}=(h_1, \ldots,
h_{m'})$, and choose the smallest $q'$ such that $h_{q'}=e_q$. Set
$P_i^*=(e_1, \ldots, e_{q-1}, e_q=h_{q'}, h_{q'+1}, \ldots,
h_{m'})$ and $P_{i'}^*=(h_1, \ldots, h_{q'-1}, h_{q'}=e_q,
e_{q+1}, \ldots, e_m)$. Set
$\P^*=\P\setminus\{P_i,P_{i'}\}\cup\{P_i^*,P_{i'}^*\}$ and set
$C^*=C$.
(Note that $q<\min(m,m')$ in this
case, so $\P^*\neq \P$.)

\item Otherwise we will find the first point along $P_i$ where we
can move a cycle from $C$ to $P_i$ or vice versa, as follows.  If
$P_i$ is not self-avoiding, let $\ell$ be the first cycle that
$P_i$ completes.  That is, choose the smallest $s$ such that
$e_r=e_s$ for some $r<s$; then $\ell=(e_r, e_{r+1}, \ldots,
e_{s-1})$.  If $P_i$ is self-avoiding, then set $s=\infty$.  If
$C$ intersects $P_i$, choose the smallest $t$ such that $e_t$
occurs in a (necessarily unique) cycle $L=(l_1,l_2 \ldots, l_w)$
in $C$, where $l_1=e_t$.  If $C\cap P_i=\emptyset$, then set
$t=\infty$. Note that at least one of $t$ or $s$ must be finite,
and $t\neq s$, since $e_s=e_r$ and $r<s$.

\begin{itemize}
\item If $t<s$, we move $L$ from $C$ to $P_i$, as follows. Set
$C^*=C\setminus\{L\}$, $P_i^*=(e_1, \ldots, e_{t-1},e_t=l_1,
\ldots, l_w, e_t, \ldots, e_m)$, and $\P^*=\P\setminus
 \{P_i\}\cup \{P_i^*\}$.

\item If $t>s$, we move $\ell$ from $P_i$ to $C$, as follows. Set
$C^*=C\cup\{\ell\}$, $P_i^*=(e_1, \ldots, e_{r-1},e_s,\ldots,
e_m)$, and $\P^*=\P\setminus\{P_i\}\cup\{P_i^*\}$.

\end{itemize}
\end{itemize}
% SIGN-REVERSING INVOLUTION END

It is easy to see that, with this definition, the image
$(C^*,\P^*)$ is again a pair of the required kind, i.e., it
satisfies the conditions (i)-(iii) above.\\

Let us verify that $\varphi$ is an involution.  First, we check
that $\varphi$ does not change the value of $i$.  That is, among
all walks in $\P^*$ which intersect themselves, another walk, or a
cycle in $C^*$, the walk with the smallest index (of its starting
point) is $P_i^*$.  Indeed, our moves only affect $P_i$, $P_{i'}$,
and $C$, keeping their combined set of edges intact, so the
involution will not introduce a new self-intersection in any $P_a$
with $a<i$, nor will it introduce an intersection between $P_a$
and any path or cycle.

Consider $\varphi(C,\P)=(C,\P^*)$ in the first case.  After
swapping tails, $P_i^*$ still has no intersections with $C$ or any
of the other paths before the edge $e_q$. Further, $P_i^*$ does
not have any self-intersections before $e_q$ (though it may have
self-intersections at $e_q$), since $P_i$ did not have any
self-intersections before $e_q$ and the tail of $P_{i'}$ did not
intersect $P_i$ before $e_q$. Thus, $e_q$ remains the first edge
along $P_i^*$ with an intersection. Now, $P_i^*$ and $P_{i'}^*$
intersect at this edge, and no path with smaller index intersects
$P_i^*$ at $e_q$, so we will swap the same tails again.

Consider the second case, with $\varphi(C,\P)=(C\setminus\{L\},
\P^*)$ or $\varphi(C,\P)=(C\cup\{\ell\}, \P^*)$.  Here, $P_i$
intersects itself or $C$ at $e_q$, but does not intersect any
other path at $e_q$. After moving a cycle, the same is true for
$P_i^*$.  (If the cycle moved starts at $e_q$, then either a
self-intersection becomes an intersection with $C$, or an
intersection with $C$ becomes a self-intersection. If the cycle
moved starts later, then the intersections at $e_q$ remain as they
are.)  If $P_i$ intersects $C$ before completing its first cycle,
then $P_i^*$ will complete its first cycle before intersecting
$C\setminus\{L\}$.  If $P_i$ completes its first cycle $\ell$
before intersecting $C$, then $P_i^*$ will intersect
$C\cup\{\ell\}$ before completing its first cycle.  Thus, the same
cycle is moved both times.  We have now shown that $\varphi$ is an
involution.\\

%CHECK SIGN CHANGE

Finally, we verify that $\varphi$ is sign-reversing.  In the case
of tail swapping, we need to show that
$\wind(P_i)+\wind(P_{i'})+\xing(\pi)+\wind(P_i^*)+\wind(P_{i'}^*)+\xing(\pi^*)$
is odd, where $\pi^*$ is the bijection such that $\P^*\in
\mathcal{P}_{\pi^*}$. By Lemma~\ref{xingtailswap},
$\xing(\pi)+\xing(\pi^*)$ is even if and only if $(i,i')$ is a
misalignment. Thus, we need to show that $(i,i')$ is a
misalignment if and only if
\begin{equation}\label{windmod2}
\wind(P_i)+\wind(P_{i'})+\wind(P_i^*)+\wind(P_{i'}^*)\equiv 1
(\text{mod 2}).
\end{equation}

\psset{unit=1pt}
\begin{figure}[h]
\begin{pspicture}(-41,-41)(41,41)
\pscircle[linecolor=black](0,0){40}

\psline[linewidth=1pt](-34.64,20)(-15,0.75)
\psline[linewidth=1pt,arrows=->,arrowsize={1.5pt
4}](15,0.75)(34.64,20) \psline[linewidth=1pt](-15.5,1)(15,1)

\psline[linestyle=dashed,linewidth=1pt](-34.64,-20)(-15,-2)
\psline[linestyle=dashed,linewidth=1pt,arrows=->,arrowsize={1.5pt
4}](15,-2)(34.64,-20)
\psline[linestyle=dashed,linewidth=1pt](-15,-2)(15,-2)

\uput[r](41,0){$\leftrightsquigarrow$}

\end{pspicture}
\qquad
\begin{pspicture}(-41,-41)(41,41)
\pscircle[linecolor=black](0,0){40}

\psline[linewidth=1pt](-34.64,20)(-15,0.75)
\psline[linewidth=1pt,arrows=->,arrowsize={1.5pt
4}](15,0.75)(34.64,-20) \psline[linewidth=1pt](-15.5,1)(15,1)

\psline[linestyle=dashed,linewidth=1pt](-34.64,-20)(-15,-2)
\psline[linestyle=dashed,linewidth=1pt,arrows=->,arrowsize={1.5pt
4}](15,-2)(34.64,20)
\psline[linestyle=dashed,linewidth=1pt](-15,-2)(15,-2)

\end{pspicture}
\qquad
\begin{pspicture}(-41,-41)(41,41)
\pscircle[linecolor=black](0,0){40}

\psline[linewidth=1pt](-34.64,20)(-15,0.75)
\psline[linewidth=1pt,arrows=->,arrowsize={1.5pt
4}](15,0.75)(34.64,20) \psline[linewidth=1pt](-15.5,1)(15,1)

\psline[linestyle=dashed,linewidth=1pt,arrows=->,arrowsize={1.5pt
4}](15,-2)(-20,-34.64)
\psline[linestyle=dashed,linewidth=1pt](-15,-2)(20,-34.64)
\psline[linestyle=dashed,linewidth=1pt](-15,-2)(15,-2)

\uput[r](41,0){$\leftrightsquigarrow$}

\end{pspicture}
\qquad
\begin{pspicture}(-41,-41)(41,41)
\pscircle[linecolor=black](0,0){40}

\psline[linewidth=1pt](-34.64,20)(-15,0.75)
\psline[linewidth=1pt,arrows=->,arrowsize={1.5pt
4}](15,0.75)(-20,-34.64) \psline[linewidth=1pt](-15.5,1)(15,1)

\psline[linestyle=dashed,linewidth=1pt](20,-34.64)(-15,-2)
\psline[linestyle=dashed,linewidth=1pt,arrows=->,arrowsize={1.5pt
4}](15,-2)(34.64,20)
\psline[linestyle=dashed,linewidth=1pt](-15,-2)(15,-2)

\end{pspicture}
\qquad
\caption{Winding index and tail swapping}
\label{fig:wind-tail-swap}
\end{figure}
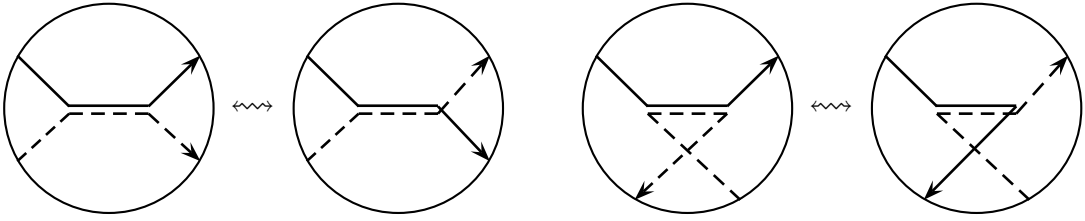
\psset{unit=1pt}

This statement is in fact true for \emph{any} instance of tail
swapping, i.e., it does not rely on our particular choice of the
walks $P_i$ and $P_{i'}$ sharing an edge $e_q$.  Viewing
(\ref{windmod2}) as a purely topological condition, we can
``unwind" each of the 4 subwalks from which our walks $P_i$ and
$P_{i'}$ are built, keeping $e_q$ fixed.  This will not change the
parity in (\ref{windmod2}) since each loop contained entirely in
one of the initial or terminal subwalks will contribute twice,
once to $\wind(P_i)+\wind(P_{i'})$ and once to
$\wind(P_i^*)+\wind(P_{i'}^*)$.  Deforming the walks as necessary,
we then obtain one of the four pictures shown in
Figure~\ref{fig:wind-tail-swap}. The last two of the four pictures
represent misalignments, and indeed, these are precisely the two
cases in which (\ref{windmod2}) holds.

In the remaining case (moving a cycle from $C$ to $\P$ or vice
versa), $\wind(P_i)$ changes parity, while $\xing(\pi)$ and all
other winding numbers do not change. Hence
$\wt(\varphi((C,\P))=-\wt(C,\P)$, as desired.
\end{proof}

\section{Extending to planar graphs with arbitrary
orientations} \label{sec:general} In this section, we provide an
extension of Theorem~\ref{th:mainthm} for arbitrarily oriented
planar networks. The proof relies on Postnikov's process in
\cite{post} for transforming an arbitrary planar circular network
into a partially specialized perfectly oriented planar circular
network.

When $G$ is a perfectly oriented graph, the following definition
is equivalent to Definition~\ref{def:flow}.  This extension
provides the appropriate setup for working in arbitrarily oriented
graphs~$G$.

\begin{defn}\label{def:altflow} A subset $F$ of (distinct)
edges in a planar circular directed graph $G$ (not necessarily
perfectly oriented) is called an \emph{alternating flow} if,
 for each interior vertex $v$ in~$G$, the edges $e_1, \ldots, e_d$ of $F$ which are incident to
$v$, listed in clockwise order around $v$, alternate in
orientation (that is, directed towards $v$ or directed away from
$v$).

In an alternating flow $F$, define the walks $W_i$ (with $i\in I$)
as follows. If $b_i$ is isolated in $F$, set $W_i$ to be the
trivial walk from $b_i$ to itself. Otherwise, let $W_i$ be the
unique path leaving $b_i$ which, at each subsequent vertex, takes
 the first left turn in~$F$, until it arrives at another boundary vertex.

For a $k$-element subset $J$ of $[n]$, we say that an alternating
flow $F$ is a \emph{flow from $I$ to~$J$} if each boundary
source~$b_i$ is connected by $W_i$ to a boundary vertex~$b_j$ with
$j\in J$. (The vertices $b_j$ are necessarily distinct.) Let
$\mathcal{A}_J$ denote the set of alternating flows from $I$ to
$J$.  In particular, $\mathcal{A}_I$ is precisely the set of
conservative alternating flows.
\end{defn}

\begin{defn}\label{def:power2}
Suppose $F$ is an alternating flow.  For each vertex $v$ in $G$,
let $\tau(v,F)$ denote the number of edges of $F$ coming into $v$.
Set
\[
\theta(F)=\sum_v\operatorname{max}\{\tau(v,F)-1,0\}.
\]
\end{defn}

\begin{cor}\label{th:main-non-perfect}
Let $N=(G,x)$ be a planar circular network with source set indexed
by $I$. Then the maximal minors of the boundary measurement matrix
$A(N)$ are given by the formula
\begin{equation}
\label{eq:general-formula}
\Delta_J(A(N))=\frac{\displaystyle\sum_{F\in\mathcal{A}_J(G)}
2^{\theta(F)}\wt(F)}{\displaystyle\sum_{C\in\mathcal{A}_I(G)}
2^{\theta(C)}\wt(C)}.
\end{equation}
\end{cor}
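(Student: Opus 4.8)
The plan is to reduce the arbitrarily oriented case to the perfectly oriented case already handled in Theorem \ref{th:mainthm}, following Postnikov's gadget-substitution trick referenced in the section opening. First I would recall the local transformation that replaces each interior vertex $v$ of $G$ by a small perfectly oriented gadget whose edges carry specialized weights ($1$'s on internal gadget edges), producing a perfectly oriented planar circular network $G'$ on the same boundary vertices. The key property to establish is that this substitution leaves all boundary measurements $M_{ij}$ unchanged, so that $A(N)=A(N')$ and hence $\Delta_J(A(N))=\Delta_J(A(N'))$ for every $J$; this lets me apply Theorem \ref{th:mainthm} to $G'$ directly.

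The heart of the argument is then a weight-preserving correspondence between the combinatorial objects in $G'$ and those in $G$. I would set up a map sending each flow $F'\in\mathcal{F}_J(G')$ (in the perfectly oriented sense of Definition \ref{def:flow}) to its restriction/contraction onto the edges of the original graph $G$, and verify that the image is precisely an alternating flow $F\in\mathcal{A}_J(G)$ in the sense of Definition \ref{def:altflow}. The alternation condition around each interior vertex $v$ should emerge exactly from the flow-conservation condition inside the gadget replacing $v$, and the left-turn rule defining the walks $W_i$ should match the way the perfectly oriented walks traverse the gadgets. Crucially, this map is many-to-one: a single alternating flow $F$ with $\tau(v,F)$ incoming edges at a vertex $v$ will be the image of several flows in $G'$, because inside the gadget there are multiple conservation-respecting ways to route the edges through. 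Counting these routings is where the factor $2^{\theta(F)}$ must come from—I expect that at a vertex with $\tau(v,F)$ incoming edges there are exactly $2^{\max\{\tau(v,F)-1,0\}}$ admissible routings through the gadget, and multiplying across all vertices yields $2^{\theta(F)}=\prod_v 2^{\max\{\tau(v,F)-1,0\}}$. Since the gadget's internal edges have weight $1$, each of these preimages has the same weight as $F$, so summing over preimages replaces $\sum_{F'}\wt(F')$ by $\sum_F 2^{\theta(F)}\wt(F)$, and likewise for the conservative flows in the denominator.

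Assembling these pieces, I would write
\[
\Delta_J(A(N))=\Delta_J(A(N'))=\frac{\displaystyle\sum_{F'\in\mathcal{F}_J(G')}\wt(F')}{\displaystyle\sum_{C'\in\mathcal{C}(G')}\wt(C')}=\frac{\displaystyle\sum_{F\in\mathcal{A}_J(G)}2^{\theta(F)}\wt(F)}{\displaystyle\sum_{C\in\mathcal{A}_I(G)}2^{\theta(C)}\wt(C)},
\]
where the first equality uses invariance of boundary measurements under the gadget substitution, the second is Theorem \ref{th:mainthm} applied to the perfectly oriented network $N'$, and the third is the $2^{\theta}$-weighted fiber count just described, applied separately to numerator and denominator (the denominator being the case $J=I$, for which $\mathcal{F}_I(G')=\mathcal{C}(G')$ and $\mathcal{A}_I(G)$ consists of the conservative alternating flows).

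I expect the main obstacle to be the bookkeeping in the fiber-counting step: one must choose the gadget so that (a) it is genuinely perfectly oriented, (b) the induced walks and left-turn rule correspond correctly, and (c) the number of gadget-internal routings compatible with a prescribed set of incoming/outgoing edges is exactly $2^{\max\{\tau(v,F)-1,0\}}$ and not merely bounded by it. Verifying the exact count—and in particular ruling out spurious preimages that fail to be valid flows in $G'$ or that collapse to the wrong alternating flow—will require a careful case analysis of the local picture at each interior vertex, ideally reducing to a single canonical gadget (e.g. a cycle of alternating sources and sinks) for which the count of $2^{\max\{\tau(v,F)-1,0\}}$ can be read off directly.
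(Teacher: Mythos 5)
Your overall strategy is the same as the paper's: apply Postnikov's local transformation to produce a perfectly oriented network $N'$ with $\Delta_J(A(N))=\Delta_J(A(N'))$, use Theorem~\ref{th:mainthm} on $N'$, and then transport the flow-generating functions back to $G$ by contracting the new edges. However, the mechanism you propose for producing the factor $2^{\theta(F)}$ is not the right one, and the gap is not mere bookkeeping. First, the gadget cannot carry unit weights on its internal edges \emph{and} leave the external weights untouched: in Postnikov's construction the weight of every edge entering a blown-up vertex must be doubled ($\alpha'_e=2\alpha_e$). This doubling is forced by the invariance $\Delta_J(A(N))=\Delta_J(A(N'))$, because a \emph{walk} (as opposed to a flow) in $G'$ may wind around the new clockwise cycle any number of times, and the alternating sum over winding numbers contributes a factor $1-1+1-\cdots=\tfrac12$ per passage that must be compensated. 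With your implicit specialization the very first equality $A(N)=A(N')$ fails.

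Second, your expected fiber count is wrong. Because each vertex of the blow-up cycle is trivalent and perfectly oriented, a flow entering the cycle at an in-edge is \emph{forced} to travel clockwise and exit at the first available out-edge of $F$; the routing through a gadget that $F$ actually uses is unique, not $2^{\max\{\tau(v,F)-1,0\}}$-fold. (Even as a count of non-crossing routings your guess is off: for $\tau=3$ there are $5$, not $4$.) The genuine multiplicity in the fiber comes instead from the blow-up vertices \emph{not} touched by $F$: at each such vertex the full weight-one cycle may be included or omitted as an extra conservative cycle, giving $2^{\eta(F)}$ preimages of equal weight. Combining this with the factor $2^{\epsilon(F)}$ from the doubled in-edge weights gives $2^{\epsilon(F)+\eta(F)}\wt(F)$ per alternating flow, and since $\epsilon(F)+\eta(F)=\theta(F)+|B(G)|$, the global constant $2^{|B(G)|}$ cancels between numerator and denominator to leave $2^{\theta(F)}$. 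Note also that the transformation has a preliminary stage you omit (pulling apart adjacent edges of the same orientation at high-degree vertices, inserting weight-one edges) which is needed before any vertex can be assumed to have alternating incident orientations; without it the blow-up step does not apply.
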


The proof of Corollary~\ref{th:main-non-perfect} will follow
Proposition~\ref{prop:maketrivalent}, which describes Postnikov's
transformation process in detail.\\

Let $E(G)$ denote the edge set of $G$.  Let
$\Delta_J(A(N))(\alpha)$ denote the evaluation of the
subtraction-free rational expression $\Delta_J(A(N))$ under a
specialization map $\alpha:E(G)\rightarrow \mathbb{R}$ assigning a
positive real weight $\alpha_e$ to each edge $e$.

For boundary measurements, there is no loss of generality in
assuming that $G$ has no internal sources or sinks.  Further, we
may assume that there are no vertices of degree 2. Indeed, if
there is a vertex $v$ with exactly one incoming edge $e_1$ and
exactly one outgoing edge $e_2$, we may remove $v$ and glue $e_1$
and $e_2$ into a single edge $e$ of weight $x_e=x_{e_1}x_{e_2}$.

\begin{prop}[\cite{post}] \label{prop:maketrivalent}Let $N=(G,x)$ be a planar network
with boundary sources indexed by $I$ and positive weight function
$\alpha:E(G)\rightarrow \mathbb{R}$, with $x_e\mapsto \alpha_e$.
Let $N'=(G',x')$ and $\alpha':E(G')\rightarrow\mathbb{R}$ (with
$x'_e\mapsto \alpha'_e$) denote a perfectly oriented planar
network and corresponding positive weight function obtained from
$N$ and $\alpha$ by the process described below. Then for all
$J\subset [n]$ with $|J|=|I|$, we have
\[\Delta_J(A(N))(\alpha)=\Delta_J(A(N'))(\alpha') .\]

To obtain $N'$ from $N$, we perform the following operations in
stages (1)-(3).
\begin{enumerate}
\item First, suppose that $N$ has an internal vertex $v$ of degree
greater than 3; let $e_1, \ldots, e_d$ denote the edges incident
to $v$, listed in clockwise order. If two adjacent edges $e_i$ and
$e_{i+1}$ (modulo $n$) have the same orientation, either both
towards $v$ or both away from $v$, we choose such a pair, pull
these edges away from $v$, insert a new vertex $v'$ and a new edge
$e$ (directed from $v'$ to $v$ when $e_i$ and $e_{i+1}$ are edges
entering $v$ and from $v$ to $v'$ when $e_i$ and $e_{i+1}$ are
edges leaving $v$), and attach the edges $e_i$ and $e_{i+1}$ to
$v'$. (See Figure~\ref{fig:pulloutedges}.) We set $\alpha'_{e}=1$.
Repeat until the resulting network has no vertices $v$ of this
form.

\psset{unit=0.9pt}
\begin{center}
\begin{figure}[ht]
\begin{pspicture}(0,0)(380,80)

\psline[linewidth=1pt,arrows=->,arrowsize={1.5pt 3}](16,64)(40,40)
\psline[linewidth=1pt,arrows=->,arrowsize={1.5pt 3}](16,16)(40,40)
\psline[linewidth=1pt,arrows=->,arrowsize={1.5pt 3}](40,40)(64,16)
\psline[linewidth=1pt,arrows=->,arrowsize={1.5pt 3}](40,40)(64,64)
\psline[linewidth=1pt,arrows=->,arrowsize={1.5pt 3}](40,40)(40,68)
\uput[l](16,64){$\alpha_{e_5}$} \uput[l](16,16){$\alpha_{e_4}$}
\uput[r](64,16){$\alpha_{e_3}$} \uput[r](64,64){$\alpha_{e_2}$}
\uput[u](40,68){$\alpha_{e_1}$}

\uput[r](80,40){$\leadsto$}

\psline[linewidth=1pt,arrows=->,arrowsize={1.5pt
3}](126,64)(150,40)
\psline[linewidth=1pt,arrows=->,arrowsize={1.5pt
3}](126,16)(150,40)
\psline[linewidth=1pt,arrows=->,arrowsize={1.5pt
3}](150,40)(180,40)
\psline[linewidth=1pt,arrows=->,arrowsize={1.5pt
3}](180,40)(204,16)
\psline[linewidth=1pt,arrows=->,arrowsize={1.5pt
3}](180,40)(204,64)
\psline[linewidth=1pt,arrows=->,arrowsize={1.5pt
3}](180,40)(180,68) \uput[l](126,64){$\alpha_{e_5}$}
\uput[l](126,16){$\alpha_{e_4}$} \uput[r](204,16){$\alpha_{e_3}$}
\uput[r](204,64){$\alpha_{e_2}$}
\uput[u](180,68){$\alpha_{e_1}$}\uput[d](165,40){$1$}

\uput[r](220,40){$\leadsto$}

\psline[linewidth=1pt,arrows=->,arrowsize={1.5pt
3}](266,64)(290,40)
\psline[linewidth=1pt,arrows=->,arrowsize={1.5pt
3}](266,16)(290,40)
\psline[linewidth=1pt,arrows=->,arrowsize={1.5pt
3}](290,40)(320,40)
\psline[linewidth=1pt,arrows=->,arrowsize={1.5pt
3}](320,40)(350,40)
\psline[linewidth=1pt,arrows=->,arrowsize={1.5pt
3}](350,40)(374,16)
\psline[linewidth=1pt,arrows=->,arrowsize={1.5pt
3}](350,40)(374,64)
\psline[linewidth=1pt,arrows=->,arrowsize={1.5pt
3}](320,40)(320,68) \uput[l](266,64){$\alpha_{e_5}$}
\uput[l](266,16){$\alpha_{e_4}$} \uput[r](374,16){$\alpha_{e_3}$}
\uput[r](374,64){$\alpha_{e_2}$}
\uput[u](320,68){$\alpha_{e_1}$}\uput[d](305,40){$1$}\uput[d](335,40){$1$}

\end{pspicture}
\qquad \caption{Pulling out adjacent edges with the same
orientation.} \label{fig:pulloutedges}
\end{figure}
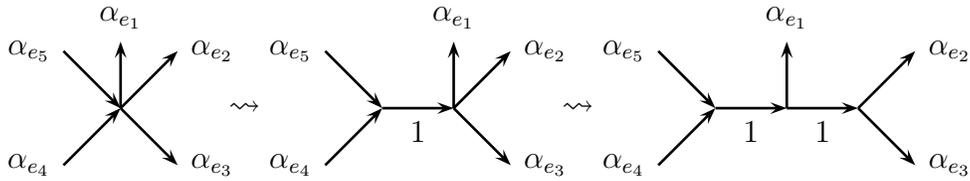
\end{center}
\psset{unit=1pt}

\item  If a vertex $v$ of degree greater than 3 remains, its
incident edges must alternate orientation in clockwise order. In
this case we blow up the vertex $v$ into a cycle with edges all
oriented clockwise, as in Figure~\ref{fig:blowupvertex}.  If $e$
is an edge coming into $v$, we set $\alpha'_e=2\alpha_e$, and if
$e$ is one of the new edges created to make the cycle, we set
$\alpha'_e=1$.  Repeat until the resulting network has no vertices
$v$ of this form.

\psset{unit=0.9pt}
\begin{center}
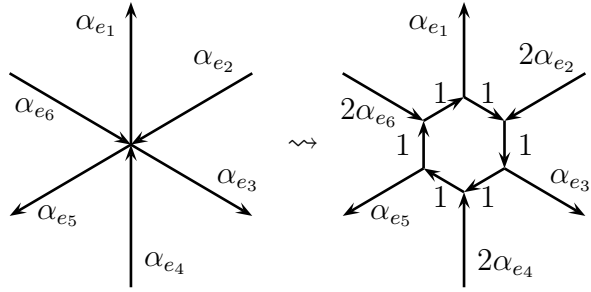
\begin{figure}[ht]
\begin{pspicture}(0,0)(280,120)

\psline[linewidth=1pt,arrows=->,arrowsize={1.5pt 3}](9,90)(60,60)
\psline[linewidth=1pt,arrows=->,arrowsize={1.5pt 3}](60,60)(9,30)
\psline[linewidth=1pt,arrows=->,arrowsize={1.5pt 3}](60,0)(60,60)
\psline[linewidth=1pt,arrows=->,arrowsize={1.5pt
3}](60,60)(111,30)
\psline[linewidth=1pt,arrows=->,arrowsize={1.5pt
3}](111,90)(60,60)\psline[linewidth=1pt,arrows=->,arrowsize={1.5pt
3}](60,60)(60,120)

\uput[d](20,85){$\alpha_{e_6}$} \uput[r](15,30){$\alpha_{e_5}$}
\uput[r](60,10){$\alpha_{e_4}$} \uput[u](105,35){$\alpha_{e_3}$}
\uput[u](95,85){$\alpha_{e_2}$} \uput[l](60,110){$\alpha_{e_1}$}

\uput[r](120,60){$\leadsto$}

\psline[linewidth=1pt,arrows=->,arrowsize={1.5pt
3}](149,90)(183,70)
\psline[linewidth=1pt,arrows=->,arrowsize={1.5pt
3}](183,50)(149,30)
\psline[linewidth=1pt,arrows=->,arrowsize={1.5pt
3}](200,0)(200,40)
\psline[linewidth=1pt,arrows=->,arrowsize={1.5pt
3}](217,50)(251,30)
\psline[linewidth=1pt,arrows=->,arrowsize={1.5pt
3}](251,90)(217,70)\psline[linewidth=1pt,arrows=->,arrowsize={1.5pt
3}](200,80)(200,120)

\psline[linewidth=1pt,arrows=->,arrowsize={1.5pt
3}](183,50)(183,70)\psline[linewidth=1pt,arrows=->,arrowsize={1.5pt
3}](200,40)(183,50)\psline[linewidth=1pt,arrows=->,arrowsize={1.5pt
3}](217,50)(200,40)\psline[linewidth=1pt,arrows=->,arrowsize={1.5pt
3}](217,70)(217,50)\psline[linewidth=1pt,arrows=->,arrowsize={1.5pt
3}](200,80)(217,70)\psline[linewidth=1pt,arrows=->,arrowsize={1.5pt
3}](183,70)(200,80)

\uput[l](183,60){$1$}\uput[r](217,60){$1$}\uput[u](190,72){$1$}
\uput[u](210,72){$1$}\uput[d](190,48){$1$}\uput[d](210,48){$1$}

\uput[d](160,85){$2\alpha_{e_6}$} \uput[r](155,30){$\alpha_{e_5}$}
\uput[r](200,10){$2\alpha_{e_4}$} \uput[u](245,35){$\alpha_{e_3}$}
\uput[u](235,85){$2\alpha_{e_2}$}
\uput[l](200,110){$\alpha_{e_1}$}

\end{pspicture}
\qquad \caption{Blowing up a vertex with alternating edge
directions.} \label{fig:blowupvertex}
\end{figure}
\end{center}
\psset{unit=1pt}

\item Finally, for any remaining edge $e$ unaffected by these
steps (i.e. such that $\alpha'_e$ has not yet been specified), set
$\alpha'_e=\alpha_e$.  Let $N'$ and $\alpha'$ denote the final
result.
\end{enumerate}
\end{prop}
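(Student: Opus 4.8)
The plan is to prove that each elementary step of the transformation preserves the entire boundary measurement matrix, and hence all of its maximal minors. Since every operation in stages (1)--(3) is performed at an \emph{interior} vertex, none of them alters the boundary data: the set $I$ of boundary sources, the integer $n$, and the interlacing signs $s(i_t,j)$ of Definition~\ref{def:boundarymeasmatrix} are untouched. Consequently, if a single step leaves every boundary measurement $M_{ij}$ unchanged (read as the subtraction-free rational expression of Theorem~\ref{th:sfre}, evaluated at the respective weights), then it leaves every entry $a_{tj}=(-1)^{s(i_t,j)}M_{i_t,j}$ unchanged, and therefore every minor $\Delta_J$. Composing the steps and noting that the process terminates in a trivalent (hence perfectly oriented) graph --- stage (1) strictly decreases the degree of the chosen vertex, and stage (2) replaces a high-degree alternating vertex by trivalent ones --- then yields $\Delta_J(A(N))(\alpha)=\Delta_J(A(N'))(\alpha')$. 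So the whole proof reduces to the invariance of the $M_{ij}$ under each move; I would treat the easy preliminary reductions and stage (1) together, and then concentrate on stage (2).

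For the preliminary reductions (contracting degree-$2$ vertices, eliminating interior sources and sinks) and for stage (1), I would exhibit an explicit weight- and winding-preserving bijection between directed walks in $G$ and in $G'$. In stage (1) the move is a planar isotopy supported near $v$ that merely reroutes every passage using $e_i$ or $e_{i+1}$ through the newly inserted edge $e$; since $\alpha'_e=1$, the weight $\wt(P)$ is unchanged, and since the isotopy does not alter the homotopy class of the tangent field, $\wind(P)$ is unchanged. Summing over $P:b_i\leadsto b_j$ as in Definition~\ref{def:bdry-meas} then shows $M_{ij}$ is preserved. The contraction of a degree-$2$ vertex merges $e_1,e_2$ into an edge of weight $x_{e_1}x_{e_2}$, which is weight-preserving on walks for the same reason.

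The heart of the matter is stage (2). Blowing up an alternating vertex $v$ of degree $m$ produces a clockwise-oriented $m$-cycle whose vertices $u_1,\dots,u_m$ each carry one external edge; a vertex with an incoming external edge is an \emph{entry}, one with an outgoing external edge is an \emph{exit}, and this arrangement makes $G'$ perfectly oriented along the cycle. Every passage of a walk through $v$ in $G$ --- entering along an incoming edge and leaving along an outgoing edge --- corresponds in $G'$ to a passage that enters the cycle at the appropriate entry vertex, runs clockwise, and departs at the appropriate exit vertex. Because the cycle may be wound around any number $t\ge 0$ of extra times before departing, a single passage in $G$ corresponds to an entire family of passages in $G'$. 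Writing $Q$ for the (formal) product of the cycle-edge weights, and recording that each clockwise revolution changes $\wind$ by $-1$, the signed weight-generating function for this family is the contribution of the direct passage (routed along the short clockwise arc) times
\[
\sum_{t\ge 0}(-1)^{t}Q^{t}=\frac{1}{1+Q},
\]
an identity valid in the formal power series ring and, crucially, as the subtraction-free rational expression furnished by Theorem~\ref{th:sfre}. Under the specialization $\alpha'$, every cycle edge has weight $1$, so $Q=1$ and this factor equals $\tfrac12$; meanwhile the doubling $\alpha'_e=2\alpha_e$ of each incoming edge contributes a compensating factor $2$ at the (unique) entry edge of the passage. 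The two factors cancel, so each passage through the cycle contributes exactly what the corresponding turn at $v$ contributed in $G$. Summing over all walks then gives $M_{ij}(G')(\alpha')=M_{ij}(G)(\alpha)$, as required.

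The main obstacle, and the point demanding genuine care, is the topological bookkeeping of winding indices in stage (2): I must check that routing a passage along the direct clockwise arc from an entry $u_i$ to an exit $u_j$, rather than turning at $v$, does \emph{not} change the winding index of the entire walk, and that each additional circuit of the cycle changes it by exactly $-1$. This I would verify by a local isotopy argument --- the blow-up is the identity outside a small disk around $v$, and inside the disk the direct arc is homotopic, rel its endpoints on the disk boundary with matched tangent directions, to the original turn --- together with the observation that the clockwise orientation forces every extra circuit to register winding $-1$. A secondary subtlety is that the geometric series above diverges at $Q=1$ when read as a numerical sum; this is harmless because the boundary measurement is to be interpreted as the subtraction-free rational expression of Theorem~\ref{th:sfre}, whose value at $Q=1$ is the finite number $\tfrac12$ (the denominator $1+Q=2$ never vanishes), not as a literal summation. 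Finally, I would note that a walk may traverse $v$ several times; since the $2\leftrightarrow\tfrac12$ cancellation occurs independently at each passage, multiplicities cause no difficulty, and the same reasoning applies verbatim when several vertices are blown up in succession.
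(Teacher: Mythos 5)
You should first note that the paper offers no proof of this proposition to compare against: it is quoted from \cite{post} (the surrounding text only sets up notation for the proof of Corollary~\ref{th:main-non-perfect}), so your write-up has to stand on its own. It does, and it is essentially Postnikov's original argument. Reducing to the invariance of each $M_{ij}$ under each local move is the right decomposition; stage (1) and the preliminary reductions are indeed weight- and winding-preserving bijections of walks; and the heart of stage (2) --- each passage through the blown-up vertex picks up a factor $2$ from the doubled entry edge and a factor $\sum_{t\ge 0}(-1)^tQ^t=\frac{1}{1+Q}=\tfrac12$ from the optional extra clockwise circuits, and these cancel --- is the correct mechanism. Your topological check is also right: the direct clockwise arc from entry to exit has the same total turning as the turn at $v$ (both are the representative in $(-\pi,\pi)$ of the same angle mod $2\pi$), and each extra circuit contributes exactly $-1$ to $\wind$, with independent choices at distinct passages and at distinct blown-up vertices.

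The only place I would press you is the evaluation of the divergent series at $Q=1$. Appealing to Theorem~\ref{th:sfre} is not by itself enough, because the power series defining $M_{ij}(N')$ genuinely fails to converge at $\alpha'$ (the cycle edges have weight $1$), so ``the value of the rational expression'' and ``the regrouped divergent sum'' must be reconciled explicitly rather than by fiat. The clean route is to leave the new cycle-edge weights as formal variables: grouping the walks of $G'$ by their contractions to walks of $G$ is then a legitimate identity of formal power series in which each passage contributes an honest factor $\frac{1}{1+Q}$; since both $\Delta_J(A(N'))$ and the regrouped expression are rational functions of all the weights and the denominators $1+Q$ do not vanish at $Q=1$, one may specialize afterwards. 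This is bookkeeping rather than a missing idea, and with it your proof is complete.
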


By \emph{contracting} an edge $e$, we mean removing the edge $e$
and identifying its two endpoints. (If we contract all edges in a
connected subset of edges, the image is a single vertex.) It is
easy to see that by contracting all new edges created in
Proposition~\ref{prop:maketrivalent} above, we obtain $G$ from
$G'$.

\begin{defn}\label{def:blowupvertices}Let $B(G)$ denote the set of vertices of $G$ around which the
orientations of edges switch at least four times. Call such a
vertex $v$ a \emph{blowup vertex}. These are precisely the
vertices which are blown up into cycles in the second stage of
Proposition~\ref{prop:maketrivalent}.

For an alternating flow $F$ in a planar network $N$, we define
$\epsilon(F)$ to be the number of edges of $F$ which enter a
blowup vertex of $G$, $\beta(F)$ to be the number of blowup
vertices of $G$ which occur as the endpoint of some edge in $F$,
and $\eta(F)$ to be the number of blowup vertices of $G$ which are
not endpoints of any edge in $F$.  Recalling
Definition~\ref{def:power2}, note that
$\theta(F)=\epsilon(F)-\beta(F)$.
\end{defn}

\begin{proof}[Proof of Corollary~\ref{th:main-non-perfect}]
Fix an image $N'$ of $N$ under the transformation in
Proposition~\ref{prop:maketrivalent}, and let $F'$ be an
alternating flow in $N'$. It is easily verified that contracting
all edges in $E(G')- E(G)$ gives a bijection between alternating
flows $F'$ in $G'$ and pairs $(F,A)$, where $F$ is an alternating
flow in $G$ and $A$ is a subset of vertices in $B(G)$ which are
not endpoints of any edges in $F$. Further, extending $\alpha$ and
$\alpha'$ linearly, we have
\[\alpha'(\wt(F'))=2^{\epsilon(F)}\alpha(\wt(F)),\]
and there are $2^{\eta(F)}$ flows $F'$ corresponding to a given
flow $F$, all with the same weight.

Since this relationship holds for every positive specialization
$\alpha$, Theorem~\ref{th:mainthm} and
Proposition~\ref{prop:maketrivalent} then imply that

\begin{equation*}
\Delta_J(A(N))=\frac{\displaystyle\sum_{F\in\mathcal{A}_J(G)}
2^{\epsilon(F)+\eta(F)}\wt(F)}{\displaystyle\sum_{C\in\mathcal{A}_I(G)}
2^{\epsilon(C)+\eta(C)}\wt(C)} .
\end{equation*}

Cancelling a factor of $|B(G)|=\eta(F)+\beta(F)$ from each term in
the numerator and denominator, we obtain the formula
(\ref{eq:general-formula}), as desired.
\end{proof}

\section{Notes on Pl\"ucker coordinates for perfectly oriented non-planar networks}
\label{sec:non-planar}

It is natural to ask to what extent we can develop these
constructions in the non-planar setting.  While the notion of the
topological winding index only makes sense for planar graphs,
Lawler's notion of loop-erasure in \cite{lawpaper} allows us to
give a non-planar analogue of the winding index if $G$ is
perfectly oriented.  In this non-planar setting, we no longer have
the positivity results, but we can describe those Pl\"ucker
coordinates which are individual boundary measurements.

We begin by extending the definition of circular directed graphs
and networks (Definition~\ref{def:circ}) to suit the non-planar
setting. For a general \emph{circular directed graph}, we no
longer require that $G$ has a planar embedding in a disk, but we
still ask for the boundary vertices to be labeled in cyclic order
and for each boundary vertex to be adjacent to at most one edge.

\begin{defn}[\cite{lew,lawbook}]
\label{looperasure} The \emph{loop-erased part} of a walk
$P:b_i\leadsto b_j$, denoted ${\rm LE}(P)$, is defined recursively
as follows. If $P=(e_1, \ldots, e_m)$ does not have any
self-intersections, then ${\rm LE}(P)=P$. Otherwise, we set ${\rm
LE}(P)={\rm LE}(P')$, where $P'$ is obtained from $P$ by removing
the first cycle it completes. More precisely, when $G$ is
perfectly oriented, find the smallest value of $s$ such that there
exists $r<s$ with $e_r=e_s$, and remove the segment $e_r, e_{r+1},
\ldots, e_{s-1}$ from~$P$ to obtain~$P'$. The \emph{loop-erasure
number} $\loop(P)$ is defined as the number of cycles erased
during the calculation of ${\rm LE}(P)$. With the notation as
above, we have $\loop(P)=\loop(P')+1$, and $\loop(P)=0$ when $P$
is a self-avoiding walk.
\end{defn}

\begin{prop}[\cite{post}]
\label{loop=wind} Suppose that $G$ is a perfectly oriented planar
circular directed graph.  If $P$ is a walk from a boundary vertex
$b_i$ to a boundary vertex $b_j$, then
$(-1)^{\loop(P)}=(-1)^{\wind(P)}$.
\end{prop}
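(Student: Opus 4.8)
I would prove the identity $(-1)^{\loop(P)}=(-1)^{\wind(P)}$ by establishing the stronger congruence $\wind(P)\equiv\loop(P)\pmod 2$, arguing by induction on $\loop(P)$, the number of cycles erased in forming ${\rm LE}(P)$. We may assume $P$ is nontrivial, the trivial walk being governed by a separate convention that does not enter the argument. Since $\loop(P)=\loop(P')+1$, where $P'$ is obtained from $P$ by deleting its first completed cycle, it suffices to prove two facts: (base case) a self-avoiding walk has even winding index; and (inductive step) deleting a single cycle flips the parity of the winding index.

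For the inductive step, write $P=(e_1,\dots,e_m)$, choose the smallest $s$ with $e_s=e_r$ for some $r<s$, and let $\ell=(e_r,\dots,e_{s-1})$ be the erased cycle, based at the vertex $v$ that is the common tail of $e_r$ and $e_s$. By minimality of $s$ the edges $e_1,\dots,e_{s-1}$ are distinct, and since the note following the definition of a walk guarantees that any self-intersection in a perfectly oriented graph forces a repeated edge, the vertices visited by $\ell$ are distinct as well; hence $\ell$ is a simple closed curve. I would then compare the total turning of the tangent along $P$ with that along $P'=(e_1,\dots,e_{r-1},e_s,\dots,e_m)$. Because $P$ and $P'$ share their endpoints $b_i,b_j$ and their initial and final edges, the normalization making the end tangents parallel contributes identically to both, so $\wind(P)-\wind(P')$ equals $\tfrac{1}{2\pi}$ times the difference of the raw turnings. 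The crucial point is that, since $e_s=e_r$, the walk $P$ leaves $v$ in the same direction on both visits: the turn $P$ makes on its second visit to $v$ is exactly the closing turn of $\ell$, while the turn on its first visit coincides with the single turn $P'$ makes at $v$. All turning along $e_1,\dots,e_{r-1}$ and along $e_s,\dots,e_m$ is common to $P$ and $P'$ and cancels, leaving the difference equal to the total turning of the closed curve $\ell$. By the theorem of turning tangents (Hopf's Umlaufsatz) this equals $\pm 2\pi$, so $\wind(P)=\wind(P')\pm 1$ and the parity flips.

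For the base case, a self-avoiding walk $P$ is an embedded arc in the disk whose two endpoints lie on the boundary circle. Any such arc is ambient isotopic, rel its endpoints, to the straight chord $[b_i,b_j]$ (two properly embedded arcs in a disk with the same endpoints separate the two boundary arcs in the same way, hence are isotopic). As the winding index is defined up to isotopy, and the chord has constant tangent with already-parallel end tangents, we get $\wind(P)=0$, which is even. Combining the base case with the inductive step yields $\wind(P)\equiv\loop(P)\pmod 2$, as desired.

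The main obstacle is the turning-number bookkeeping in the inductive step: I must verify carefully that deleting $\ell$ leaves every other turning contribution untouched, so that the entire change in winding is captured by $\ell$ alone. This is exactly where perfect orientation is used, through the identity $e_s=e_r$, which forces the revisited vertex $v$ to contribute the closing turn of $\ell$ rather than some unrelated angle; without it the turning along $\ell$ would not cleanly separate from the turning along the retained portions of the walk. A secondary point needing care is confirming that the first erased cycle really is a simple closed curve, so that the Umlaufsatz applies.
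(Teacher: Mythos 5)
Your proof is correct and follows essentially the same route as the paper: induction on $\loop(P)$, with the base case that a self-avoiding boundary-to-boundary walk has winding index zero and the inductive step that erasing one cycle changes $\wind$ by $\pm 1$. The paper states these two facts in a single sentence without elaboration, whereas you supply the supporting details (the isotopy of an embedded arc to a chord, the verification via perfect orientation that the erased cycle is a simple closed curve, and the Umlaufsatz computation of its turning), which is exactly the content the paper leaves implicit.
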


\begin{proof}
Each boundary vertex is incident to at most one edge, so $P$ has
no self-intersections at its endpoints. Since $G$ is perfectly
oriented, $P$ repeats at least one edge at every
self-intersection. The claim then follows by induction on
$\loop(P)$, as an erasure of a cycle changes the winding index
by~$\pm 1$.
\end{proof}

Proposition~\ref{loop=wind} allows us to view $\loop(P)$ as a
natural generalization of $\wind(P)$ for the non-planar case. This
observation leads to an extension of Postnikov's construction
(which applies to planar networks and employs the winding index)
to arbitrary perfectly oriented graphs.
Definitions~\ref{def:perf},~\ref{def:wind},~\ref{def:bdry-meas},~\ref{def:boundarymeasmatrix},
~\ref{def:xing-number}, and~\ref{def:flow} then extend to
perfectly oriented non-planar networks in the obvious way,
replacing $\wind(P)$ with $\loop(P)$ wherever appropriate.

\begin{cor}\label{th:main-non-planar}  Suppose $N=(G,x)$ is a
perfectly oriented circular network with boundary source set
indexed by $I$.  Then, for $i\in I$ and $j\in [n]$, we have

\[
M_{ij}=\Delta_{(I\setminus\{i\})\cup\{j\}}(A(N))
=\frac{\displaystyle\sum_{F\in\mathcal{F}_{(I\setminus\{i\})\cup\{j\}}(G)}
\wt(F)}{\displaystyle\sum_{C\in\mathcal{C}(G)} \wt(C)}.
\]

\end{cor}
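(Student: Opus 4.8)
The plan is to specialize the sign-reversing involution from the proof of Theorem~\ref{th:mainthm}, observing that for a single boundary measurement the combinatorics degenerates so that the only genuinely planar ingredient of that proof---the winding-index analysis in the tail-swapping step---is never invoked.

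The equality $M_{ij}=\Delta_{(I\setminus\{i\})\cup\{j\}}(A(N))$ is a formal consequence of the structure of $A(N)$ and survives unchanged in the non-planar setting, since Proposition~\ref{xing} is proved purely from the definition of the boundary measurement matrix together with the sign bookkeeping of Lemma~\ref{alg-xing}, with no appeal to planarity; it therefore holds verbatim with $\loop$ in place of $\wind$. Writing $J=(I\setminus\{i\})\cup\{j\}$ with $j\notin I$, any bijection $\pi\colon I\to J$ contributing a nonzero term must fix $I\cap J=I\setminus\{i\}$, which forces $\pi(i)=j$. This unique $\pi$ has $\xing(\pi)=0$, because the one-element set $I\setminus J=\{i\}$ contains no pair to cross, and every diagonal factor $M_{i'i'}$ with $i'\neq i$ equals $1$, since a boundary source has no incoming edge and hence admits only the trivial self-walk, with $\loop=0$. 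Thus $\Delta_J=M_{ij}$. (When $j=i$ one has $J=I$ and all three expressions equal $1$, as $\mathcal{F}_I(G)=\mathcal{C}(G)$; the possibility $j\in I\setminus\{i\}$ yields no maximal minor and may be ignored.)

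The substantive claim is the second equality, which after clearing the denominator becomes
\[
\sum_{P\colon b_i\leadsto b_j}\ \sum_{C\in\mathcal{C}(G)}(-1)^{\loop(P)}\,\wt(P)\,\wt(C)\;=\;\sum_{F\in\mathcal{F}_J(G)}\wt(F).
\]
I would prove this by the involution of Theorem~\ref{th:mainthm}, now acting on pairs $(P,C)$ for which $P$ either fails to be self-avoiding or meets $C$. The key simplification is that in the collection $\P$ associated with $\pi$, every walk other than $P_i=P$ is a trivial walk at a boundary source; such a vertex is incident to at most one edge and so can never be traversed as an interior vertex of a walk or a cycle, whence the trivial walks are isolated and take part in no intersection. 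Consequently the tail-swapping branch of the involution, which requires two genuinely interacting nontrivial walks, is never triggered, and only the cycle-moving branch occurs. That branch is sign-reversing for an entirely local reason: moving the first completed or encountered cycle between $P$ and $C$ changes $\loop(P)$ by exactly $1$ through the recursion $\loop(P)=\loop(P')+1$ of Definition~\ref{looperasure}, thereby flipping $(-1)^{\loop(P)}$ while preserving $\wt(P)\wt(C)$. That the map is an involution is precisely the combinatorial verification carried out in the proof of Theorem~\ref{th:mainthm}, which uses no planarity. Being sign-reversing on its domain, it cancels the contribution of every pair that does not form a flow; the remaining pairs---those with $P$ self-avoiding and disjoint from $C$---are exactly the flows $F\in\mathcal{F}_J(G)$, and since then $\loop(P)=0$ each contributes $+\wt(F)$, establishing the identity.

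I expect the only delicate points to be the clean justification that tail swaps cannot arise---namely that a boundary vertex of degree at most one is never an internal vertex of $P$ or of a cycle in $C$---and the recognition that the loop-erasure recursion supplies exactly the $\pm1$ parity change that the winding index provided in the planar argument. This also pinpoints why the result does not extend to arbitrary minors: once $|I\setminus J|\geq 2$ there are two or more nontrivial walks, tail swapping genuinely occurs, and its sign is governed by the topological configurations of Figure~\ref{fig:wind-tail-swap}, for which loop-erasure offers no substitute---exactly the reason the formula fails for general $\Delta_J$ in the non-planar case.
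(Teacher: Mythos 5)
Your proposal is correct and follows essentially the same route as the paper, which proves the corollary in one line by observing that for these Pl\"ucker coordinates the tail-swapping branch of the involution from Theorem~\ref{th:mainthm} is never invoked, leaving only the cycle-moving branch, whose sign-reversal is governed by the loop-erasure recursion rather than the winding index. You have simply supplied the details the paper leaves implicit (the reduction via Proposition~\ref{xing} to a single nontrivial walk, the fact that trivial walks at boundary sources cannot participate in intersections, and the parity change of $\loop$ under moving a cycle), all of which are accurate.
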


\begin{proof}
This follows directly from the proof of Theorem~\ref{th:mainthm},
since for these special Pl\"ucker coordinates, the tail swapping
process of the proof is never called upon.
\end{proof}

Although the result holds for those minors $\Delta_J$ which are
boundary measurements $M_{ij}$, it is generally not valid for the
remaining Pl\"ucker coordinates. For non-planar networks, tail
swapping does not always yield the sign change in $(-1)^{\xing}$
that we obtain in the planar case.  As a result, the numerator and
denominator of a minor $\Delta_J$ do not necessarily simplify to
linear combinations of flow weights.

\begin{example}
Consider the network $N$ in Figure~\ref{fig:non-planar}, with
boundary measurement matrix $A(N)$ below.  The minors
$\Delta_{12}$, $\Delta_{13}$, $\Delta_{14}$, $\Delta_{24}$, and
$\Delta_{34}$ all satisfy Theorem~\ref{th:mainthm}.
\[
A(N)=\left(%
\begin{array}{cccc}
  1 & \frac{a_1fa_2}{1+cdef} & \frac{a_1feda_3}{1+cdef} & 0 \\
  0 & \frac{a_4dcfa_2}{1+cdef} & \frac{a_4da_3}{1+cdef} & 1 \\
\end{array}%
\right)
\]

However, for $\Delta_{23}$, we do not get the desired
cancellation; in the simplified rational expression, both the
numerator and denominator are quadratic in flow weights. We have
\[
\Delta_{23}(A(N))= \frac{a_1a_2a_3a_4\cdot
df(1-cdef)}{(1+cdef)^2}~.
\]
\end{example}

\psset{unit=9pt}
\begin{center}
\begin{figure}[ht]
\begin{pspicture}(-2,2)(32,14)

\psline[linestyle=dotted](0,2)(0,14)(30,14)(30,2)(0,2)

\psline[linewidth=2pt,arrows=*->,arrowsize={1.5pt 4}](0,4)(10,12)
\psline[linewidth=2pt,arrows=*-,arrowsize={1.5pt
4}](10,4)(5.3,7.76)
\psline[linewidth=2pt,arrows=->,arrowsize={1.5pt
4}](4.7,8.24)(0,12)
\psline[linewidth=2pt,arrows=*-,arrowsize={1.5pt 4}](0,12)(0,12)
\psline[linewidth=2pt,arrows=*->,arrowsize={1.5pt 4}](10,12)(10,4)
\psline[linewidth=2pt,arrows=*->,arrowsize={1.5pt 4}](10,4)(20,4)
\psline[linewidth=2pt,arrows=*->,arrowsize={1.5pt 4}](20,4)(20,12)
\psline[linewidth=2pt,arrows=*->,arrowsize={1.5pt
4}](20,12)(10,12)
\psline[linewidth=2pt,arrows=*->,arrowsize={1.5pt 4}](30,4)(20,4)
\psline[linewidth=2pt,arrows=*->,arrowsize={1.5pt
4}](20,12)(30,12) \psline[linewidth=2pt,arrows=*-,arrowsize={1.5pt
4}](30,12)(30,12)

\uput[l](0,4){\large{$b_1$}} \uput[l](0,12){\large{$b_2$}}
\uput[r](30,12){\large{$b_3$}} \uput[r](30,4){\large{$b_4$}}

\uput[u](3.5,9.5){$a_2$} \uput[d](3.5,6.5){$a_1$}
\uput[u](15,12){$c$} \uput[r](10,8){$f$} \uput[r](20,8){$d$}
\uput[d](15,4){$e$} \uput[u](25,12){$a_3$} \uput[d](25,4){$a_4$}

\end{pspicture}
\qquad \caption{Boundary measurements in a non-planar network.}
\label{fig:non-planar}
\end{figure}
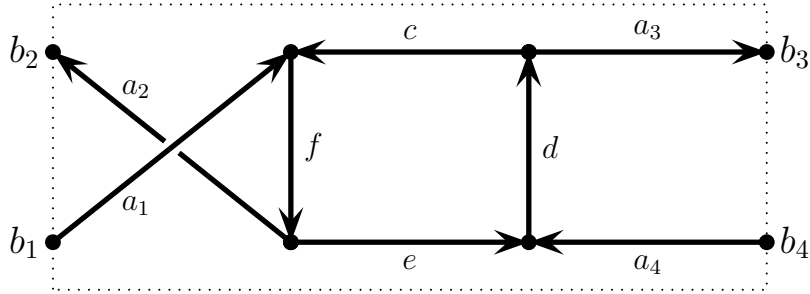
\end{center}
\psset{unit=1pt}

\begin{remark}
If we consider flow weights as polynomials with coefficients in
the finite field of two elements, $\mathbb{F}_2$, then
(\ref{eq:main-formula}) holds for all $\Delta_J$ in the perfectly
oriented non-planar case; this also follows directly from the
proof of Theorem~\ref{th:mainthm}.
\end{remark}

\section*{Acknowledgements}
The author would like to thank Alexander Postnikov for updates on
the work that inspired this paper; Alek Vainshtein, Lauren
Williams, and Gregg Musiker for insightful conversations; and
Sergey Fomin for many helpful comments on preliminary versions of
the manuscript.

\end{document}